\newcommand{\hilb}{\text{-Hilb}}
\newcommand{\Conj}{\operatorname{Conj}}
\newcommand{\Exc}{\operatorname{Exc}}
\theoremstyle{plain}
\newtheorem{theorem}{Theorem}[section]
\newtheorem{corollary}[theorem]{Corollary}
\newtheorem{lemma}[theorem]{Lemma}
\newtheorem{definition-lemma}[theorem]{Definition-Lemma}
\newtheorem{proposition}[theorem]{Proposition}
\newtheorem{conjecture}[theorem]{Conjecture}
\theoremstyle{definition}
\newtheorem{remark}[theorem]{Remark}
\newtheorem{example}[theorem]{Example}
\numberwithin{equation}{section}
\numberwithin{figure}{section}
\numberwithin{table}{section}
\newdimen\argwidth
\def\db[#1\db]{%
 \setbox0=\hbox{$#1$}\argwidth=\wd0
 \setbox0=\hbox{$\left[\box0\right]$}
  \advance\argwidth by -\wd0
 \left[\kern.3\argwidth\box0 \kern.3\argwidth\right]}
\newcommand{\diag}{\operatorname{diag}}
\newcommand{\id}{\operatorname{id}}
\newcommand{\GL}{GL}
\newcommand{\SL}{SL}
\newcommand{\Irrep}{\operatorname{Irrep}}
\newcommand{\Hom}{\operatorname{Hom}}
\newcommand{\Spec}{\operatorname{Spec}}
\newcommand{\Sing}{\operatorname{Sing}}
\newcommand{\bA}{\ensuremath{\mathbb{A}}}
\newcommand{\bC}{\ensuremath{\mathbb{C}}}
\newcommand{\bP}{\ensuremath{\mathbb{P}}}
\newcommand{\bQ}{\ensuremath{\mathbb{Q}}}
\newcommand{\bR}{\ensuremath{\mathbb{R}}}
\newcommand{\bZ}{\ensuremath{\mathbb{Z}}}
\newcommand{\scC}{\ensuremath{\mathcal{C}}}
\newcommand{\scD}{\ensuremath{\mathcal{D}}}
\newcommand{\scM}{\ensuremath{\mathcal{M}}}
\newcommand{\scO}{\ensuremath{\mathcal{O}}}
\newcommand{\scU}{\ensuremath{\mathcal{U}}}
\newcommand{\scX}{\ensuremath{\mathcal{X}}}
\newcommand{\scY}{\ensuremath{\mathcal{Y}}}
\newcommand{\Dtilde}{{\widetilde{D}}}
\newcommand{\Xtilde}{{\widetilde{X}}}
\newcommand{\Ytilde}{{\widetilde{Y}}}
\newcommand{\Dbar}{{\overline{D}}}
\newcommand{\Rbar}{{\overline{R}}}
\newcommand{\Sbar}{{\overline{S}}}
\newcommand{\Dhat}{\widehat{D}}
\newcommand{\xleftrightarrows}[2][]{\mathrel{
 \raise.40ex\hbox{$
       \ext@arrow 3095\leftarrowfill@{\phantom{#1}}{#2}$}
 \setbox0=\hbox{$\ext@arrow 0359\rightarrowfill@{#1}{\phantom{#2}}$}
 \kern-\wd0 \lower.40ex\box0}}
\newcommand{\xrightleftarrows}[2][]{\mathrel{
 \raise.40ex\hbox{$\ext@arrow 3095\rightarrowfill@{\phantom{#1}}{#2}$}
 \setbox0=\hbox{$\ext@arrow 0359\leftarrowfill@{#1}{\phantom{#2}}$}
 \kern-\wd0 \lower.40ex\box0}}
\newcommand{\xleftrightarrow}[2][]{
     \ext@arrow 0055{\leftrightarrowfill@}{#1}{#2}
}
\def\leftrightarrowfill@{
 \arrowfill@\leftarrow\relbar\rightarrow
}  
\title[Derived McKay correspondence for real reflection groups]{Derived McKay correspondence for real reflection groups of rank three}
\author[A. Ishii]{Akira Ishii}
\address{Graduate School of Mathematics, Nagoya University, Furocho, Chikusa-ku, Nagoya, 464-8602, Japan}
\email{akira141@math.nagoya-u.ac.jp}
\author[S. Nimura]{Shu Nimura}
\address{Graduate School of Mathematics, Nagoya University, Furocho, Chikusa-ku, Nagoya, 464-8602, Japan}
\email{shu.nimura.c6@math.nagoya-u.ac.jp}
\begin{document}

\begin{abstract}
We describe the derived McKay correspondence for real reflection groups of rank $3$
in terms of a maximal resolution of the logarithmic pair consisting of the quotient variety and the discriminant divisor
with coefficient $\frac{1}{2}$.
As an application, we verify a conjecture by Polishchuk and Van den Bergh
on the existence of a certain semiorthgonal decomposition of the equivariant derived category
into the derived categories of affine spaces for
any real reflection group of rank $3$.
\end{abstract}

\maketitle

%
%
%

%
%
\section{Introduction}\label{sc:Introduction}
The derived McKay correspondence, established for $G\subset \SL(2, \bC)$ \cite{Kapranov-Vasserot} and $G\subset \SL(3, \bC)$ \cite{BKR},
states that there is an equivalence $D^G(\bA^n) \cong D(Y)$ between the bounded derived category $D^G(\bA^n)$ of
$G$-equivariant coherent sheaves on $\bA^n$ and the bounded derived category $D(Y)$ of coherent sheaves
on a crepant resolution $Y$ of $\bA^n/G$.
In the case $G\subset \GL(n, \bC)$ and $G\not\subset\SL(n, \bC)$ with $n=2, 3$, similar correspondences have been described in terms of semiorthogonal decompositions
\cite{Ishii-Ueda} \cite{kawamata_toric_III} \cite{kawamata_GL3}.
To describe the McKay correspondence for subgroups of $\GL(3, \bC)$ in \cite{kawamata_GL3}, Kawamata defined and constructed a {\it maximal $\bQ$-factorial terminalization} $Y$ with only quotient singularities, and obtained a semiorthogonal decomposition of $D^G(\bA^3)$ in which the derived category $D(\Ytilde)$ of the canonical stack $\Ytilde$ of $Y$ appears as a component \cite{kawamata_GL3}.

A feature of Kawamata's results  \cite{kawamata_toric_III} \cite{kawamata_GL3}
is that his construction is valid even when $G$ contains {\it pseudoreflections},
where an element of $\GL(n, \bC)$ is called a pseudoreflection if its fixed point set is a hyperplane.
 In such a case,  one should consider not the quotient variety alone but the logarithmic pair consisting
of the quotient variety and the branch divisor with suitable rational coefficients.
An extreme case is the case where $G$ is a complex reflection group, i.e., it is generated by pseudoreflections.
For such a group, the quotient variety $X:=\bA^n/G$ is again isomorphic to $\bA^n$ by a famous theorem of Chevalley, Shephard and Todd,
and the $\bQ$-divisor we consider is of the form $B=\sum_i\frac{m_i-1}{m_i}D_i$ on $X$
where $D=\sum_i D_i$ is the discriminant divisor on the quotient $\bA^n$ (decomposed into irreducible components $D_i$)
and $m_i$ is the order of the pointwise stabilizer subgroup of a hyperplane of $\bA^n$ mapped to $D_i$.
In this article, we consider the case of real reflection groups of rank $3$.
In this case, the boundary divisor is of the form $B=\frac{1}{2}D$ since any reflection is of order $2$.
As a derived McKay correspondence for real reflection groups of rank $3$, we prove the following.
\begin{theorem}\label{thm:equivalence}
For a real reflection group $G$ of rank $3$, there exists a maximal resolution $Y$ of the pair $(X, \frac{1}{2}D)$
such that
\begin{enumerate}
\item the morphism $Y \to X$ is a composite of blowups along smooth curves isomorphic to $\bA^1$,
\item if  $\Dtilde=\sum_i\Dtilde_i$ denotes the strict transform of $D$ to $Y$ then $\Dtilde$ is smooth,
\item each $\Dtilde_i$  is isomorphic to $\bA^2$ blown up along points finetely many times,
\item if $\scY$ denotes the Deligne-Mumford stack associated with the pair $(Y, \frac{1}{2}\Dtilde)$,
then there is an equivalence
\begin{equation}\label{eq:mckay_for_reflection}
D(\scY) \cong D^G(\bA^3).
\end{equation}
\end{enumerate}
\end{theorem}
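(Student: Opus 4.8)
The plan is to analyse the discriminant of each rank-$3$ real reflection group explicitly, to build the resolution $Y$ by an iterated blow-up along smooth rational curves, and then to prove \eqref{eq:mckay_for_reflection} by a Bridgeland--King--Reid--Kawamata type Fourier--Mukai argument, carrying the boundary $\tfrac12 D$ and the $\mu_2$-stabilizers along throughout. One first reduces to the irreducible groups: up to conjugacy a rank-$3$ real reflection group is $A_3$, $B_3$, $H_3$, or a product $A_1\times I_2(m)$ (with $I_2(2)=A_1\times A_1$). In the reducible case $\bA^3=\bA^1\times\bA^2$ and the pair $(X,\tfrac12 D)$, hence also the stack $\scY$, has a product description (up to one auxiliary crepant blow-up); the rank-$1$ factor is a single reflection, for which $\scY=[\bA^1/\mu_2]$ and $D(\scY)=D^{\bZ/2}(\bA^1)$ tautologically, and the rank-$2$ factor is dihedral, where \eqref{eq:mckay_for_reflection} is the two-dimensional derived McKay correspondence for the pair $(\bA^2,\tfrac12 D)$ (classical, or a special case of the argument below). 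A K\"unneth theorem for the equivariant derived categories then disposes of $A_1\times I_2(m)$, so the substance lies in $A_3$, $B_3$, $H_3$.

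One then constructs $Y$. For these three groups $\bA^3\to X\cong\bA^3$ is given by basic invariants (of degrees $2,3,4$; $\,2,4,6$; $\,2,6,10$), and $D\subset X$ is a classical surface --- the swallowtail for $A_3$ --- whose singular locus is a finite union of smooth affine lines meeting only over the origin. Blowing up $\Sing D$ and then repeatedly blowing up the smooth rational curves that appear terminates after finitely many steps in a tower $\rho\colon Y\to X$, which gives (1); a local computation at the successive centres shows the strict transform $\Dtilde$ becomes smooth, which gives (2), and that each $\Dtilde_i$ is the minimal resolution of a component of $D$ whose normalization is $\bA^2$ --- for $A_3$, the space $(t-r)^2(t^2+2rt+c)$ of monic quartics with a marked double root --- so that $\Dtilde_i$ is $\bA^2$ blown up finitely many times at points, which gives (3). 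A log-discrepancy computation identifies $Y$ as a maximal resolution of $(X,\tfrac12 D)$ in Kawamata's sense: every blown-up curve has multiplicity $2$ in the relevant strict transform of $D$, so the corresponding exceptional divisor has discrepancy $2-\tfrac12\cdot 2-1=0$ for the pair, while the worst point of $D$ --- e.g. the swallowtail point, where $\operatorname{mult}_0 D=3$ --- has positive discrepancy and is not blown up. The feature special to rank-$3$ real reflection groups is that this resolution is \emph{crepant} for the pair, i.e. $K_\scY$ is the pullback of $K_X+\tfrac12 D$, so that $\scY\to\scX:=[\bA^3/G]$ is crepant.

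For the equivalence, recall $D^G(\bA^3)=D(\scX)$, and note that $\rho$ lifts to a morphism $\scY\to\scX$ over $X$ which is an isomorphism over the open locus where $G$ acts with at most reflection stabilizers. Let $Z\subset\scY\times_X\scX$ be the closure of its graph, and set $\Phi=\bR p_{\scX*}\bigl(p_\scY^{*}(-)\Lotimes\scO_Z\bigr)\colon D(\scY)\to D(\scX)$. Since $\scY$ and $\scX$ are smooth Deligne--Mumford stacks and $\scY\to\scX$ is crepant, one is in the setting of the Bridgeland--King--Reid theorem as extended to such stacks by Kawamata. Fully faithfulness of $\Phi$ reduces, by the standard spanning-class argument, to the identity $\RHom_\scX(\Phi\scO_p,\Phi\scO_q)\cong\RHom_\scY(\scO_p,\scO_q)$ for closed points $p,q$ of $\scY$ --- a statement local on $X$: over the positive-dimensional strata of $D$ it follows from the rank-$1$ and rank-$2$ cases already treated, and over the origin from the explicit description of the exceptional fibre of $\rho$ (equivalently, one verifies the bound $\dim(\scY\times_\scX\scY)\le\dim\scY+1$). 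Since $\Phi$ is then a fully faithful exact functor between the indecomposable Calabi--Yau-type categories $D(\scY)$ and $D(\scX)$ of equal dimension --- or, alternatively, since one checks that $\rank K_0(\scY)=\#\{\text{strata}\}=\#\{\text{irreducible representations of }G\}=\rank K_0^G(\bA^3)$ --- it is an equivalence.

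The hard part will be the local analysis over the origin: the fibre-dimension estimate --- equivalently, the identification of $\scY$ with the distinguished component of the moduli of $G$-clusters over the deepest stratum --- is where $A_3$, $B_3$ and $H_3$ genuinely differ, and where the full strength of the explicit resolution of the discriminant, together with the local model of the $G$-action near each deeper stratum of $D$, must be used. Once that is in place, upgrading fully faithfulness to an equivalence, and deducing the Polishchuk--Van den Bergh semiorthogonal decomposition of $D^G(\bA^3)$ into derived categories of affine spaces by feeding $\scY$ into the root-stack decomposition $D(\scY)=\langle D(Y),D(\Dtilde)\rangle$ and then the blow-up formulas for $Y$ and for each $\Dtilde_i$, are formal.
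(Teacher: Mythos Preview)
Your strategy diverges from the paper's at the crucial step (4), and the divergence is where your argument has a genuine gap. The paper never attempts a direct BKR argument for the crepant morphism of stacks $\scY\to\scX=[\bA^3/G]$. Instead it introduces $H:=G\cap\SL(3,\bC)$, constructs a $(G/H)$-equivariant projective crepant resolution $f_H\colon Y_H\to X_H=\bA^3/H$ with $Y=Y_H/(G/H)$ and $\scY=[Y_H/(G/H)]$, and then imports the equivalence from the $\SL(3)$ theory: for $(\bZ/2\bZ)^3$, $D_{2n}\times\bZ/2$ and the tetrahedral group one takes $Y_H$ to be an iterated Hilbert scheme so that $D^G(\bA^3)\cong D^{G/H}(Y_H)$ by \cite{Ishii-Ueda}; for $B_3$ and $H_3$ one has $G=H\times\{\pm1\}$ and invokes Yamagishi's theorem \cite{MR4871720} that \emph{every} projective crepant resolution of $\bA^3/H$ is a moduli space $\scM_\theta$ of $H$-constellations, which makes the BKR equivalence $D(Y_H)\cong D^H(\bA^3)$ automatically $\{\pm1\}$-equivariant (Proposition~\ref{prop:yamagishi}). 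No new fibre-dimension estimate is ever needed: the one for $Y_H\times_{X_H}Y_H$ is already inside \cite{BKR} and \cite{Craw-Ishii}.

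Your proposal, by contrast, defers exactly this estimate for $\scY\times_\scX\scY$ over the origin as ``the hard part'', offering no mechanism to carry it out beyond ``the explicit resolution of the discriminant''. That is precisely the work the paper's detour through $H$ is designed to avoid; without it your Fourier--Mukai functor is not shown to be fully faithful. Your promotion from fully faithful to equivalence is also not sound as written: $D(\scX)$ and $D(\scY)$ are \emph{not} Calabi--Yau-type categories, since $G\not\subset\SL(3,\bC)$ acts on $\omega_{\bA^3}$ via the nontrivial sign character, so the Serre functor is not a shift and the usual indecomposability trick does not apply; and a rank comparison of $K_0$ alone cannot upgrade a fully faithful functor to an equivalence. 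Two smaller points: the reducible case $D_{2n}\times\bZ/2$ cannot be handled by a pure K\"unneth argument, because the maximal resolution $Y$ (hence $\scY$) is genuinely not a product---see the extra blowups in \eqref{eq:dihedral_1}; and your log-discrepancy remark that ``the swallowtail point has positive discrepancy and is not blown up'' is misleading, since every curve you blow up passes through the origin and the exceptional fibre over $0$ is positive-dimensional in all three irreducible cases.
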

Here, a maximal resolution means a maximal $\bQ$-factorial terminalization which is smooth as a variety.
While a maximal $\bQ$-factorial terminalization always exists by \cite[Corollary 1.4.3]{BCHM}, the existence of a maximal resolution
is a non-trivial question in dimension $\ge 3$.
In fact, the construction in \cite{kawamata_GL3} directly applied to this case  produces singular
maximal $\bQ$-factorial terminalizations (see Remark \ref{rem:singular}), and we have to modify the construction.

Theorem \ref{thm:equivalence} has an application to a problem on semiorthogonal decompositions of equivariant derived categories.
The explicit description of $Y$ yields a semiorthogonal decomposition of the derived category $D(Y)$ into the derived
categories of $X$ and the blowup centers by \cite{Orlov_SOD}.
Moreover, we have $D(\scY)=\langle D(\scD), D(Y) \rangle$ by the semiorthogonal decomposition \eqref{eq:sod} for the root stack $\scY \to Y$.
Combining these semoiorthogonal decompositions and descriptions of $\Dtilde$ with \eqref{eq:mckay_for_reflection}, we obtain the following theorem.
\begin{theorem}\label{thm:sod}
Let $G$ be a real reflection group of rank $3$
acting linearly on $\bA^3$,
and put $X:=\bA^3/G \cong \bA^3$.
Then the equivariant derived category $D^G(\bA^3)$ has a semiorthogonal decomposition of the form
\[
D^G(\bA^3)=\langle D(Z_1), \dots, D(Z_m), D(X) \rangle
\]
where $Z_i$ are isomorphic to $\bA^0$, $\bA^1$ or $\bA^2$.
Moreover, for $i=0, 1, 2$, the number of $Z_j$ isomorphic to $\bA^i$ coincides
with the number of conjugacy classes $[g]$ whose fixed point loci $(\bA^3)^g$are of dimension $i$.
\end{theorem}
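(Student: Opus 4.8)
The plan is to assemble the semiorthogonal decomposition by combining Theorem \ref{thm:equivalence} with two standard semiorthogonal devices: Orlov's blowup formula and the root-stack decomposition. First I would start from the equivalence \eqref{eq:mckay_for_reflection}, $D(\scY)\cong D^G(\bA^3)$, so that it suffices to decompose $D(\scY)$. Since $\scY$ is the Deligne--Mumford stack associated with the pair $(Y,\tfrac12\Dtilde)$ and $\Dtilde$ is smooth by Theorem \ref{thm:equivalence}(2), $\scY\to Y$ is the root stack of order $2$ along the smooth divisor $\Dtilde$, and the decomposition \eqref{eq:sod} gives $D(\scY)=\langle D(\scD),D(Y)\rangle$, where $\scD$ is the (possibly stacky, but here honest since the orders are all $2$ and $\Dtilde$ is smooth) divisor; concretely $\scD\simeq\Dtilde=\bigsqcup_i\Dtilde_i$. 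Next I would apply Orlov's theorem \cite{Orlov_SOD} to the morphism $Y\to X$, which by Theorem \ref{thm:equivalence}(1) is a composite of blowups along smooth curves each isomorphic to $\bA^1$; each such blowup along a codimension-$2$ center $C\cong\bA^1$ contributes one extra component equivalent to $D(C)\cong D(\bA^1)$, so iterating yields $D(Y)=\langle D(C_1),\dots,D(C_r),D(X)\rangle$ with each $C_j\cong\bA^1$.

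Substituting the second decomposition into the first gives
\[
D^G(\bA^3)=\langle D(\Dtilde_1),\dots,D(\Dtilde_s),D(C_1),\dots,D(C_r),D(X)\rangle .
\]
To reach the stated form I would then further decompose each $D(\Dtilde_i)$: by Theorem \ref{thm:equivalence}(3) each $\Dtilde_i$ is $\bA^2$ blown up finitely many times along points, so Orlov's blowup formula applied to these surface blowups (centers $\cong\bA^0$) gives $D(\Dtilde_i)=\langle D(\bA^0),\dots,D(\bA^0),D(\bA^2)\rangle$. Collecting everything, every component is $D(Z)$ with $Z\cong\bA^0$, $\bA^1$ or $\bA^2$, and there is exactly one copy of $D(X)=D(\bA^3)$, which we keep as the last term. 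This proves the first assertion. Note that $D(\bA^0)$, $D(\bA^1)$, $D(\bA^2)$ and $D(\bA^3)$ are all generated by a single exceptional object, so one may also read this as a full exceptional collection; but the statement as phrased only needs the block form just obtained.

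For the counting statement I would compare two Euler-characteristic-type invariants of $D^G(\bA^3)$. On one side, the category $D^G(\bA^3)$ is (after passing to the stack $[\bA^3/G]$) graded by conjugacy classes of $G$ via the standard decomposition of equivariant $K$-theory or Hochschild homology: the Hochschild homology of $[\bA^3/G]$ splits as $\bigoplus_{[g]}H_\ast((\bA^3)^g)^{C(g)}$, and since each fixed locus $(\bA^3)^g$ is a linear subspace the $[g]$-summand is one-dimensional, sitting in (co)homological degree determined by $\dim(\bA^3)^g$. On the other side, our semiorthogonal decomposition gives $HH_\ast(D^G(\bA^3))=\bigoplus_\alpha HH_\ast(Z_\alpha)$, and $HH_\ast(\bA^i)$ is one-dimensional concentrated in the degree matching $\dim=i$; hence the number of blocks with $Z_\alpha\cong\bA^i$ equals the number of conjugacy classes $[g]$ with $\dim(\bA^3)^g=i$. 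The main obstacle is bookkeeping the grading/degree shifts correctly: one must check that the homological shift attached to the $[g]$-summand on the equivariant side (which involves the ``age'' or the dimension of the fixed locus, and for reflection groups there is a clean statement since all non-identity contributions come from the reflection arrangement and its flats) is matched, block by block, by the shift coming from the exceptional objects $\scO_{Z_\alpha}$ in the blowup/root-stack decompositions; equivalently one can phrase the whole argument with the numerical Grothendieck group and track ranks, where the pairing is unimodular and the comparison is purely combinatorial. Once this matching is in place, counting is immediate and completes the proof.
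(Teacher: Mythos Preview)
Your construction of the semiorthogonal decomposition is correct and is exactly the strategy the paper follows: start from the equivalence \eqref{eq:mckay_for_reflection}, apply the root-stack decomposition \eqref{eq:sod} to split off the $D(\Dtilde_i)$, apply Orlov's blowup formula iteratively to $Y\to X$ to produce the $D(\bA^1)$ pieces, and then apply Orlov again to each $\Dtilde_i\to\bA^2$ to produce the $D(\bA^0)$ and $D(\bA^2)$ pieces. This part needs no change.

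The counting argument, however, has a genuine gap. Your claim that $HH_\ast(\bA^i)$ is ``one-dimensional concentrated in the degree matching $\dim=i$'' is false: for a smooth affine variety $HH_n(\bA^i)\cong H^0(\bA^i,\Omega^n_{\bA^i})$, which is the (infinite-dimensional) space of polynomial $n$-forms, nonzero for every $0\le n\le i$. So neither the dimension nor the grading of $HH_\ast$ separates the pieces $D(\bA^0),D(\bA^1),D(\bA^2)$. Passing to $K$-theory does not help either: $K_0(\bA^i)=\bZ$ for every $i$, so the Bass--Haboush isomorphism $K_0^G(\bA^3)\cong R(G)$ only recovers the \emph{total} number of components, not the number of each type. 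The ``age'' heuristic you allude to is promising (for a real orthogonal matrix one does have $\mathrm{age}(g)=\tfrac12\codim(\bA^3)^g$, so the orbifold grading lines up with the fixed-locus dimension), but turning this into an honest additive invariant compatible with your SOD requires more than a bookkeeping remark; as written it is not a proof.

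The paper does not attempt a uniform categorical count. Instead it runs through the classification of rank-$3$ real reflection groups (five cases beyond those reducible to rank $\le 2$) and in each case reads off, from the explicit description of the blowups $Y\to X$ and of the components $\Dtilde_i$, the numbers of $D(\bA^0)$, $D(\bA^1)$, $D(\bA^2)$ pieces; these are then matched by hand against the list of conjugacy classes of $G$ grouped by $\dim(\bA^3)^g$. If you want to avoid the case analysis, you would need to supply an honest argument relating (i) the number of blowup centers in $Y\to X$ to the number of $G$-conjugacy classes of rotations, and (ii) the total number of point-blowups in the $\Dtilde_i$ to the number of classes with isolated fixed point; neither follows from your HH/$K$-theory sketch.
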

This theorem proves the following conjecture of Polishchuk and Van den Bergh
in the case of the linear action of a real reflection group of rank $3$,
since the quotient of $(\bA^3)^g\cong \bA^i$ by the centralizer $C(g)$ is isomorphic to $\bA^i$
(see \cite[Proposition 2.2.6]{PVdB}).

\begin{conjecture}[\cite{PVdB}]\label{conj:pvdb}
Suppose a finite group $G$ acts effectively on a smooth quasiprojective variety $X$.
Assume for any $g \in G$ that the quotient $X^g/C(g)$ of the invariant subvariety $X^g$ by the centralizer $C(g)$ is smooth.
Then there exists a semiorthogonal decomposition
\[
D^G(X) \cong \langle \scC_{[g]} \rangle_{[g]\in \Conj G}
\]
indexed by the set $\Conj G$ of the conjugacy classes of $G$
such that $\scC_{[g]} \cong D(X^g/C(g))$.
\end{conjecture}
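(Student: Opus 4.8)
The plan is to deduce Conjecture~\ref{conj:pvdb}, for a real reflection group $G$ of rank $3$ acting linearly on $\bA^3$, from Theorem~\ref{thm:sod} together with \cite[Proposition~2.2.6]{PVdB}; the only real task is to match the components of the semiorthogonal decomposition of Theorem~\ref{thm:sod} with the conjugacy classes of $G$. First I would verify that the hypothesis of the conjecture is met and pin down the candidate components: for each $g\in G$ the fixed locus $(\bA^3)^g$ is a linear subspace preserved by $C(g)$, and by \cite[Proposition~2.2.6]{PVdB} the quotient $(\bA^3)^g/C(g)$ is an affine space of dimension $i:=\dim(\bA^3)^g$, hence smooth; so the conjecture applies and necessarily $\scC_{[g]}\cong D(\bA^{i})$. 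For $g=e$ this reads $(\bA^3)^e/C(e)=\bA^3/G=X\cong\bA^3$ (with $X$ as in Theorem~\ref{thm:sod}).

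Next I would run the finite, purely combinatorial classification of $\Conj G$ by the dimension $i$ of the fixed locus, case by case over the rank-$3$ real reflection groups (the irreducibles $A_3$, $B_3$, $H_3$ and the reducibles $A_1^{\,3}$ and $A_1\times I_2(m)$). Here $i=3$ occurs only for $[e]$; $i=2$ exactly for the (one or more) classes of reflections; $i=1$ exactly for the classes of nontrivial rotations about an axis; and $i=0$ for the classes of elements fixing only the origin. Letting $n_i$ denote the number of classes with $\dim(\bA^3)^g=i$, it follows that the multiset $\{\,D((\bA^3)^g/C(g))\,\}_{[g]\in\Conj G}$ consists of one copy of $D(\bA^3)$ together with $n_i$ copies of $D(\bA^{i})$ for $i=0,1,2$.

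Finally I would invoke Theorem~\ref{thm:sod}, which gives a semiorthogonal decomposition $D^G(\bA^3)=\langle D(Z_1),\dots,D(Z_m),D(X)\rangle$ with $X\cong\bA^3$ and with the number of $Z_j$ isomorphic to $\bA^{i}$ equal to $n_i$ for $i=0,1,2$. Thus the multiset of triangulated categories occurring as components of this decomposition is exactly $\{\,D((\bA^3)^g/C(g))\,\}_{[g]\in\Conj G}$, and any bijection of index sets respecting these identifications (sending the $D(X)$-slot to $[e]$) rewrites the decomposition as the $\langle\scC_{[g]}\rangle_{[g]\in\Conj G}$ required by the conjecture, with $\scC_{[g]}\cong D((\bA^3)^g/C(g))$. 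The main point to watch — the closest thing to an obstacle — is that Theorem~\ref{thm:sod} determines the correspondence between components and conjugacy classes only through the numerical data $n_i$, not canonically; but since Conjecture~\ref{conj:pvdb} asks only for the existence of such a decomposition this is sufficient, so the proof reduces to the bookkeeping of correctly tabulating $\Conj G$ together with the dimensions of the fixed loci and checking the isomorphisms $(\bA^3)^g/C(g)\cong\bA^{i}$ against \cite[Proposition~2.2.6]{PVdB}.
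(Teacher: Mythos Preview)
Your proposal is correct and follows essentially the same route as the paper: the paper deduces the special case of Conjecture~\ref{conj:pvdb} for real reflection groups of rank~$3$ directly from Theorem~\ref{thm:sod} together with \cite[Proposition~2.2.6]{PVdB}, exactly as you do. Your middle paragraph tabulating $\Conj G$ case by case is harmless but unnecessary, since the matching of the $n_i$ with the number of conjugacy classes having $\dim(\bA^3)^g=i$ is already part of the statement of Theorem~\ref{thm:sod}.
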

In the case of the linear action of $G \subset \GL(n, \bC)$ on $\bA^n$,
the smoothness assumption of $X^g/C(g)$ for $g=e$ implies
$G$ is generated by pseudoreflections, that is, $G$ is a complex reflection group.
Polishchuk and Van den Bergh proved that the conjecture
(together with the smoothness of the quotients) is true
for the Weyl groups of type $A_n, B_n, G_2$ and $F_4$
(and the smoothness assumption fails for Weyl groups of type $D_n$)
as well as the complex reflection groups $G(m, 1, n)$.
They found other groups for which the smoothness assumption of $X^g/C(g)$
holds \cite[Proposition 2.2.6]{PVdB} and which were left for future work: complex reflection groups of rank $2$,
real reflection groups of rank $3$, and the group of type $H_4$.
Theorem \ref{thm:sod} establishes the case of real reflection groups of rank $3$.

We notice that for complex reflection groups of rank $2$, the conjecture follows from Kawamata's work
(see Corollary \ref{cor:kawamata}).
Recently,  a more explicit description of the decomposition for some reflection groups of rank $2$
was given in \cite{rank2} by Bhaduri, Davidov, Faber, Honigs, McDonald, Overton-Walker and Spence.
For dihedral groups, there are related works by Potter \cite{potter} and Capellan \cite{capellan}.
There is also a version of the McKay correspondence for complex reflection groups by Buchweitz, Faber, and Ingalls \cite{BFI} in terms of maximal Cohen Macaulay modules over the discriminant divisor.
It would be interesting to investigate its relation with constructions in Theorem \ref{thm:equivalence}.

We now explain the stragegy for the proof of Theorem \ref{thm:equivalence}.
We may suppose $G$ is not conjugate to a subgroup of $\GL(2, \bC)$.
Put $H:=G \cap \SL(3, \bC)$ and $X_H:=\bA^3/H$.
Together with the maximal resolution $f:Y \to X$, we construct a
($G/H$)-equivariant crepant resolution
\begin{equation}\label{eq:crepant}
f_H: Y_H \to X_H
\end{equation}
such that 
$Y$ and $\scY$ are isomorphic to the quotient variety $Y_H/(G/H)$ and the quotient stack $[Y_H/(G/H)]$ respectively.
\begin{equation}\label{eq:zushiki}
\begin{tikzpicture}[auto]
\node (m) at (0,0) {$\bA^3$}; \node[right=2 of m.center] (x0)  {$X_H$}; \node[right=2 of x0.center] (x) {$X$};
\node[above=1 of x0.center] (yh) {$Y_H$}; \node[above=1 of x.center] (x1) {$Y$};
\draw[->] (m) to node {$\pi_H$} (x0);
\draw[->] (yh) to node[swap] {$f_H$} (x0);
\draw[->] (x0) to node {$\pi_{G/H}$} (x);
\draw[->] (yh) to node {$\pi'_{G/H}$} (x1);
\draw[->] (x1) to node (x) {$f$} (x);
\end{tikzpicture}
\end{equation}
To construct $f:Y \to X$ and $f_H:Y_H \to X_H$,  we use one of
two different methods depending on the classification of groups in \S \ref{subsec:classify}.
For the groups other than the octahedral and the icosahedral groups,
we first construct $Y_H$ as an iterated Hilbert scheme
\[Y_H:=(H/K)\hilb(K\hilb(\bA^3))\]
for a suitable subgroup $K\subset H$ which is normal in $G$.
In this case, $Y_H$ is smooth by \cite{BKR}
and there is an equivalence $D^G(\bA^3) \cong D^{G/H}(Y_H)$
as in \cite[Theorem 4.1]{Ishii-Ueda}.
Verification of the smoothness of $Y:=Y_H/(G/H)$ and the description of $Y$ as
an iterated blowup of $X$ is done explicitly in each case.
This method is not always applicable since, for example, $H$ is a simple group in the icosahedral case.
For the octahedral and the icosahedral groups, we first obtain $Y$ as a successive blowup of $X$ along smooth curves
in the singular locus of the discriminant divisor $D$ and then construct $Y_H$ as a double covering of $Y$ branched along the strict transform $\Dtilde$.
This method was used to construct a crepant resolution of $\bA^3/H$ by
Bertin and Markushevich  \cite{MR1273078} in the tetrahedral and the octahedral cases, and by Roan \cite{MR1284568} in the icosahedral case.
In the octahedral and the icosahedral cases, we need to verify the equivalence $D^G(\bA^3) \cong D^{G/H}(Y_H)$ without using iterated Hilbert schemes.
For these groups, noting that $G=H\times\{\pm1\}$ holds, we apply  Proposition \ref{prop:yamagishi} below, which follows from Yamagishi's theorem \cite{MR4871720} stating that any projecrive crepant resolution of $\bA^3/H$ can be obtained as the moduli space of $G$-constellations for a suitable stability parameter.

The organization of the paper is as follows.
In \S \ref{subsec:max}, we recall the notion of a maximal $\bQ$-factorial terminalization and then discuss its construction from a crepant resolution $Y_H$ in the case of real reflection groups in Lemma \ref{lem:max}.
In \S \ref{subsec:ghilb}, we recall the properties of $G\hilb$ and the moduli space of $G$-constellations.
In particular, we prove Proposition \ref{prop:yamagishi} which is used in \S \ref{subsec:octahedral}
and \S \ref{subsec:icosahedral}.
In  \S \ref{subsec:sod}, we recall the semiorthogonal decomposition of derived categories in the cases of blowups and root stacks.
In \S \ref{sec:2-dim}, we recall Kawamata's works and explain Corollary \ref{cor:kawamata},
which is Conjecture \ref{conj:pvdb} in the case of complex reflection groups of rank $2$.
In \S \ref{sec:main}, we prove Theorems \ref{thm:equivalence} and \ref{thm:sod}.
We first recall the classification of real reflection groups of rank $3$ in \S \ref{subsec:classify}.
There are five types of real reflection groups of rank $3$ that are not conjugate to subgroups
of $\GL(2, \bC)$.
We prove one case in one subsection.

\emph{Acknowledgement}:
The first author thanks Kazushi Ueda for discussions on Conjecture \ref{conj:pvdb}.
He is also grateful to Yujiro Kawamata for his explanations about maximal $\bQ$-factorial terminalizations.
A.~I. is supported by Grant-in-Aid for Scientific Research (No.19K03444).
This work was supported by the Research Institute for Mathematical Sciences,
an International Joint Usage/Research Center located in Kyoto University.
\section{Preliminaries}\label{sec:pre}
We prove some results needed to prove the main theorems.
\subsection{Maximal $\bQ$-factorial terminalizations}\label{subsec:max}
For a KLT pair $(X, B)$, a projective birational morphism $f: Y \to X$ is called a {\it maximal $\bQ$-factorial terminailzation} \cite{kawamata_GL3} of $(X, B)$
if $Y$ has only $\bQ$-factorial and terminal singularities, and the exceptional divisors of $f$ are the prime divisors over $X$ with non-positive discrepancies. 
In \cite{BCHM}, it is called a terminal model, and its existence is proved in \cite[Corollary 1.4.3]{BCHM}.
We say $Y$ is a {\it maximal resolution} if $Y$ is smooth.
\begin{lemma}\label{lem:max}
For a real reflection group $G$ of rank $3$,
put $X:=\bA^3/G$, $H:=G\cap \SL(3, \bC)$, $X_H:=\bA^3/H$,
and let $D\subset X$ be the discriminant divisor.
Suppose
\[
f_H: Y_H \to X_H
\]
is a $(G/H)$-equivariant projective crepant resolution and put $Y:=Y_H/(G/H)$.
\begin{enumerate}
\item
The fixed point locus $Y_H^{(G/H)}$ is the disjoint union of a finite number of smooth surfaces and points.
\item
$Y$ is a maximal $\bQ$-factorial terminalization of $(X, \frac{1}{2}D)$.
\item
$Y$ is a maximal resolution of $(X, \frac{1}{2}D)$ if and only if $Y_H^{(G/H)}$ contains no isolated points.
\end{enumerate}
\end{lemma}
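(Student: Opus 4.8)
The plan rests on the observation that the canonical form on $Y_H$ is anti-invariant under the $(G/H)$-action. Since $G$ is generated by reflections and every reflection has determinant $-1$, the determinant restricts to a surjection $G\to\{\pm1\}$ with kernel $H$, so $G/H\cong\bZ/2$; write $\sigma$ for the resulting involution on $X_H$ and $Y_H$, induced by some $\tilde\sigma\in G$ with $\det\tilde\sigma=-1$ normalising $H$. It acts faithfully (an element of $G/H$ acting trivially on $\bA^3/H$ would be represented by an element of $H$). As $G\subset\mathrm{O}(3,\bR)$, the subgroup $H=G\cap\mathrm{SO}(3,\bR)$ contains no pseudoreflection, so $\bA^3\to X_H$ is étale in codimension one; hence $\omega_{X_H}\cong\scO_{X_H}$ is trivialised by the descent of $dx\wedge dy\wedge dz$, $X_H$ is Gorenstein canonical, and $\omega:=f_H^*(dx\wedge dy\wedge dz)$ is a nowhere-vanishing section of $\omega_{Y_H}$. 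From $\tilde\sigma^*(dx\wedge dy\wedge dz)=(\det\tilde\sigma)\,dx\wedge dy\wedge dz=-dx\wedge dy\wedge dz$ one obtains, after descent and pullback, $\sigma^*\omega=-\omega$. I also record $K_{X_H}=\pi_{G/H}^*(K_X+\tfrac12 D)$ (valid because $\bA^3\to X$ is crepant for $(X,\tfrac12 D)$ while $\bA^3\to X_H$ is étale in codimension one), so that $(X,\tfrac12 D)$ is KLT.

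For (1): the fixed locus $Y_H^{(G/H)}$ is smooth (fixed loci of finite group actions on smooth varieties in characteristic zero are smooth) and closed, hence a disjoint union of finitely many smooth irreducible subvarieties of dimension $0$, $1$ or $2$ (not $3$, since $\sigma\ne\id$). At a fixed point $p$ choose local $\sigma$-eigencoordinates with eigenvalues $\epsilon_1,\epsilon_2,\epsilon_3\in\{\pm1\}$; evaluating $\sigma^*\omega=-\omega$ at $p$ forces $\epsilon_1\epsilon_2\epsilon_3=-1$. Thus either exactly one $\epsilon_i$ is $-1$ — then $p$ lies on a $2$-dimensional component and $Y=Y_H/\sigma$ is smooth near the image of $p$ — or all three are $-1$, in which case $p$ is isolated in $Y_H^{(G/H)}$ and $Y$ acquires a (terminal, $\bQ$-factorial) $\tfrac12(1,1,1)$ quotient singularity there. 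In particular there are no curve components, which proves (1), and the only two local models for $Y$ over $Y_H$ are a smooth point and the $\tfrac12(1,1,1)$ singularity.

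For (2): $Y$ is $\bQ$-factorial, being a finite quotient of the smooth $Y_H$, and by the previous paragraph its only singularities are the terminal $\tfrac12(1,1,1)$ points; $f:Y\to X$ is projective and birational. Let $\rho:Y_H\to Y$ be the quotient and $R\subset Y_H$ the union of the $2$-dimensional components of $Y_H^{(G/H)}$, i.e. the ramification divisor of $\rho$, along which $\rho$ is simply ramified, so $\rho^*\Delta=2R$ where $\Delta:=\rho(R)$ is the branch divisor. Riemann–Hurwitz gives $K_{Y_H}=\rho^*(K_Y+\tfrac12\Delta)$; combining with $K_{Y_H}=f_H^* K_{X_H}=f_H^*\pi_{G/H}^*(K_X+\tfrac12 D)$, the equality $f\circ\rho=\pi_{G/H}\circ f_H$, and injectivity of $\rho^*$ on $\bQ$-divisor classes, one gets $K_Y+\tfrac12\Delta=f^*(K_X+\tfrac12 D)$. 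Hence every prime divisor exceptional over $X$ and appearing on $Y$ has discrepancy $0$ or $-\tfrac12$ for $(X,\tfrac12 D)$, in particular nonpositive. Conversely, $R$ is a disjoint union of smooth surfaces by (1), $\sigma$ acts trivially on $R$ so $\rho|_R$ is an isomorphism onto $\Delta$, and $R$ is disjoint from the isolated fixed points; therefore $\Delta$ is a disjoint union of smooth surfaces contained in the smooth locus of $Y$, and near every point of $Y$ the pair $(Y,\tfrac12\Delta)$ looks like a smooth pair $(\bC^3,0)$, $(\bC^3,\tfrac12\{x_3=0\})$, or the pair $(\tfrac12(1,1,1),0)$ — all terminal. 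So any prime divisor $E$ over $X$ not lying on $Y$ satisfies $a(E;X,\tfrac12 D)=a(E;Y,\tfrac12\Delta)>0$. Together these show the exceptional divisors of $f$ are exactly the prime divisors exceptional over $X$ with nonpositive discrepancy, proving (2). Finally, for (3): a maximal resolution is a maximal $\bQ$-factorial terminalization which is smooth; by (2) the former always holds, and by the local analysis in (1) the singular locus of $Y$ is precisely the image of the isolated points of $Y_H^{(G/H)}$, each a genuine $\tfrac12(1,1,1)$ singularity, so $Y$ is smooth if and only if $Y_H^{(G/H)}$ has no isolated points.

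The conceptual heart is the identity $\sigma^*\omega=-\omega$, which at one stroke rules out $1$-dimensional fixed components and fixes the two local models of $Y$. I expect the most delicate step to be the converse half of the maximality in (2): verifying that $Y$ omits no exceptional divisor of nonpositive discrepancy. This needs the branch divisor $\Delta$ to be smooth and disjoint from $\Sing(Y)$ — so that $(Y,\tfrac12\Delta)$ is terminal — which in turn relies precisely on the disjointness of the fixed locus established in (1), together with the (standard) fact that $\tfrac12(1,1,1)$ is a terminal singularity.
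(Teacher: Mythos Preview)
Your proof is correct and shares its conceptual core with the paper's argument: both hinge on the anti-invariance $\sigma^*\omega=-\omega$ of the global $3$-form, which forces the eigenvalue $-1$ to appear with odd multiplicity on tangent spaces at fixed points and thereby excludes curve components and pins down the two local models.

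The one genuine difference is in part (2). The paper spends a paragraph proving an auxiliary geometric fact: \emph{no $2$-dimensional component of $Y_H^{(G/H)}$ lies in the exceptional locus of $f_H$}. This is argued by reducing along a rotation axis to the dihedral action on the minimal resolution of an $A_m$-singularity and checking that a reflection never fixes an entire exceptional curve. From this the paper identifies the branch divisor with the strict transform $\Dtilde$ of $D$, so that $K_Y+\tfrac12\Dtilde=f^*(K_X+\tfrac12 D)$ with \emph{all} exceptional discrepancies equal to $0$. You bypass this entirely: you work with the branch divisor $\Delta=\rho(R)$ as it stands, allow a priori for exceptional components of $\Delta$ (giving discrepancies $0$ or $-\tfrac12$), and establish terminality of $(Y,\tfrac12\Delta)$ directly from the local models coming from (1). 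For the lemma as stated this is a clean shortcut. What the paper's longer route buys is the extra information $\Delta=\Dtilde$, hence that $\Dtilde$ is smooth and that every exceptional discrepancy is exactly zero; these facts are invoked later in the paper (e.g.\ in the case-by-case verification of Theorem~\ref{thm:equivalence}(2)), so if you adopt your streamlined proof of the lemma you would need to supply $\Delta=\Dtilde$ separately where it is used.
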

\begin{proof}
(1) Let $\sigma \in G/H$ be the generator.
We first prove that any connected component of the fixed point locus $(Y_H)^\sigma$ has odd codimension.
Consider the $H$-invariant $3$-form $dx\wedge dy \wedge dz$ on $\bA^3$,
which uniquely descends to a nowhere vanishing $3$-form $\omega_0$ on the smooth locus of $X_H$. Since $f_H$ is a crepant resolution,
$\omega_0$ extends to a nowhere vanishing $3$-form $\omega$ on $Y_H$.
On $\bA^3$, observe that a reflection pulls $dx\wedge dy \wedge dz$ back to
$-(dx\wedge dy \wedge dz)$,
which implies that
$\sigma^*\omega = -\omega$.
Therefore, if $P$ is a point of $(Y_H)^\sigma$, then the action of the involultion $\sigma$
on the Zariski tangent space $T_{Y_H, P}$  has $-1$ as its eigenvalue with odd multiplicity.
Thus we conclude that $(Y_H)^\sigma$ is the disjoint union of smooth surfaces and isolated points.

(2)
By (1), the quotient $Y_H/(G/H)$ has at worst quotient singularities of type $\frac{1}{2}(1,1,1)$
corresponding to isolated fixed points of $\sigma$ and thus $Y$ has only $\bQ$-factorial and terminal singularities.

We show that no two-dimensional component of $(Y_H)^\sigma$
is contained in the exceptional locus of $f_H$.
Recall that exceptional divisors of $f_H$ are in one-to-one correspondence
with junior conjugacy classes of $H$ \cite{IR}.
Since any element of $H$ is a rotation, its fixed point locus is not isolated.
This implies that the image of an exceptional divisor of $f_H$ has to be one-dimensional.
Assume that a two-dimensional component $S$ of $(Y_H)^\sigma$ is contained in the exceptional locus of $f_H$.
Then $C:=f_H(S)$ has to be an irreducible component of $\Sing X_H$
and hence $C$ is the image of a rotation axis $L \subset \bA^3$.
If $H_L \subset H$ denotes the pointwise stabilizer subgroup of $L$,
then $\bA^3/H_L \to \bA^3/H$ is {\' e}tale over a neibouhood of $C \setminus \{0\}$.
Since $\bA^3$ decomposes as $\bA^3=L \oplus L^{\perp}$ for some two-dimensional
subspace $L^{\perp}$ as a representation of the cyclic group $H_L$, 
$\bA^3/H_L \cong L \times (L^{\perp}/H_L)$ is the product of $L$ with
a two-dimensional singularity of type $A$.
Then, the induced action of $\sigma$ on the crepant resolution of $\bA^3/H_L$
comes from a reflection in a dihedral group in $\GL(2, \bC)$.
However, a reflection in a dihedral group does not fix every point on an exceptional curve
of the crepant resolution (see \cite[Theorem 6.5.4]{potter} or Example \ref{ex:dihedral} below)
and hence the same statement is true for the action of $\sigma$ on the fiber $f_H^{-1}(c)$ for $c \in C \setminus \{0\}$.
This contradicts our assumption and $S$ is not contained in the exceptional locus of $f_H$.

Now consider the pair $(Y, \frac{1}{2}\Dtilde)$ where $\Dtilde \subset Y$ is the strict transform of $D$.
It follows from what we have seen that $\Dtilde$ is smooth, is contained in the smooth locus of $Y$, and is the branch divisor of the quotient morphism $Y_H \to Y$.
Then the crepantness of $f_H$ implies the equality $K_Y + \frac{1}{2}\Dtilde =f^*(K_X + \frac{1}{2}D)$,
which means that the discrepancies of exceptional divisors of $f$ with respect to the pair $(X, \frac{1}{2}D)$ are all zero.
Since the pair $(Y, \frac{1}{2}\Dtilde)$ is also terminal, we conclude that $Y$ is a maximal $\bQ$-factorial terminalization.

(3) As observed in (1), the singular points of $Y$ are exactly the images of the isolated fixed points.
This implies (3).
\end{proof}
\begin{remark}\label{rem:singular}
If we put $Y_H=H\hilb(\bA^3)$ as in \cite{kawamata_GL3}, we can observe that the quotient $Y=Y_H/(G/H)$ has singularities whenever $G$ is not conjugate to a subgroup of $\GL(2, \bC)$ (see Remarks \ref{rem:Z_2^3}, \ref{rem:dihedral}, \ref{rem:tetra}, \ref{rem:octa} and \ref{rem:icosa}).
Thus, to obtain a maximal resolution, we have to construct a different crepant resolution.
\end{remark}
\subsection{$G$-Hilbert scheme and its variants}\label{subsec:ghilb}
For a finite subgroup $G \subset \GL(n, \bC)$, the $G$-Hilbert scheme \cite{Nakamura} \cite{Ito-Nakamura} is defined to
be the moduli scheme of {\it $G$-clusters}, where a $G$-cluster is a $G$-invariant finite subscheme $Z \subset \bC^n$ such that $H^0(\scO_Z)$ is isomorphic to the regular representation of $G$.
When $G \subset \SL(n, \bC)$ for $n \le 3$, by the work of Bridgeland, King and Reid \cite{BKR},
$Y:=G\hilb(\bC^n) \to X:=\bC^n/G$ is a crepant resolution and the Fourier-Mukai functor induces an equivalence
\[
D(Y) \cong D^G(\bA^n)
\]
between the derived category $D(Y)$ of coherent sheaves on $Y$ and the derived category $D^G(\bA^n)$
of $G$-equivariant coherent sheaves on $\bA^n$.

As a generalization of a $G$-cluster,  a {\it $G$-constellation} \cite{Craw-Ishii} is defined to be a
$G$-equivariant coherent sheaf $F$ on $\bC^n$ with finite support such that $H^0(F)$ is isomorphic to the regular representation of $G$.
For a parameter $\theta \in \Hom_{\bZ}(R(G), \bQ)$ where $R(G)$ is the representation ring of $G$ (regarded as a module), the $\theta$-stability of $G$-constellations is defined and there is a fine moduli scheme $\scM_\theta$ of $\theta$-stable $G$-constellations.
A stability parameter $\theta$ is said to be {\it generic} if the $\theta$-semistability implies the $\theta$-stability.
When $G \subset \SL(n, \bC)$ with $n \le 3$ and when $\theta$ is generic, arguments in \cite{BKR} also show that $Y=\scM_\theta$ is a crepant resolution of $\bA^n/G$ and there is an equivalence $D(Y) \cong D^G(\bA^n)$ \cite{Craw-Ishii}.

An example of the moduli space $\scM_\theta$ of $G$-constellations is constructed as an iterated Hilbert scheme \cite{IIN}.
If $N$ is a normal subgroup of $G$, then one can construct the iterated Hilbert scheme
$(G/N)\hilb(N\hilb(\bC^n))$.
 Assume $n \le 3$ and $N \subset \SL(n, \bC)$.
It is proved in \cite{IIN} that this iterated Hilbert scheme is canonically isomorphic to $\scM_\theta$ for a suitable choice of a generic stability parameter $\theta$.
An advantage of the iterated Hilbert scheme
is that it induces the following equivalence \cite[Theorem 4.1]{Ishii-Ueda}
\begin{equation}\label{eq:equivFM}
D^G(\bA^n) \cong D^{G/N}(N\hilb(\bA^n)).
\end{equation}
We use the iterated Hilbert scheme as the crepant resolution $Y_H$ in \eqref{eq:crepant} for several cases
in the classification of groups, but we use a different method to construct such a crepant resolution in the other cases (for example, there is no non-trivial iterated Hilbert scheme for a simple group).
In such cases, the next proposition, which follows from Yamagishi's result \cite{MR4871720}, is used:
\begin{proposition}\label{prop:yamagishi}
Let $G \subset \GL(3, \bC)$ be a finite subgroup of the form $G=A \times H$ where $A$ and $H$ are subgroups of $\GL(3, \bC)$ such that $H \subset \SL(3, \bC)$.
Let $Y$ be a projective crepant resolution of $\bA^3/H$.
Then the birational action of $A$ on $Y$, induced from the action on $\bA^3/H$, is a regular action, 
and there is an equivalence of triangulated categories:
\[
D^G(\bA^3) \cong D^A(Y).
\]
\end{proposition}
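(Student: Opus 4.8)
The plan is to realize $Y$ as a moduli space of $H$-constellations via Yamagishi's theorem, to upgrade the birational $A$-action on it to a regular one by a moduli argument, and finally to equivariantize the associated Fourier--Mukai equivalence. As the first step I would apply Yamagishi's theorem \cite{MR4871720}: since $H \subset \SL(3,\bC)$ and $Y$ is a projective crepant resolution of $\bA^3/H$, there are a generic stability parameter $\theta \in \Hom_{\bZ}(R(H), \bQ)$ and an isomorphism $Y \cong \scM_\theta$ over $\bA^3/H$, where $\scM_\theta$ is the fine moduli space of $\theta$-stable $H$-constellations. Let $\scU$ be the universal $H$-constellation on $\scM_\theta \times \bA^3$, normalized as in \cite{Craw-Ishii} so as to be unique; then the Fourier--Mukai functor $\Phi_\scU \colon D(\scM_\theta) \to D^H(\bA^3)$ is an equivalence by \cite{BKR} and \cite{Craw-Ishii}.

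The second step is to show that the $A$-action on $Y$ is regular, using this moduli description. Since $G = A \times H$, each $a \in A$ acts on $\bA^3$ linearly and commutes with $H$; hence for an $H$-constellation $F$ the pullback $(a^{-1})^*F$ is again $H$-equivariant, its global sections again form the regular representation of $H$, and it is again an $H$-constellation. Moreover conjugation by $a$ is trivial on $H$ and hence on $R(H)$, so $\theta$ is $A$-invariant and $(a^{-1})^*$ preserves $\theta$-stability. Thus $a \mapsto \bigl[\, F \mapsto (a^{-1})^*F \,\bigr]$ defines a regular action of $A$ on $\scM_\theta$, compatible with the $A$-action on $\bA^3/H$ under the natural morphism $\scM_\theta \to \bA^3/H$. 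Over the smooth locus of $\bA^3/H$ --- an $A$-invariant dense open subset --- the resolution $Y \to \bA^3/H$ is an isomorphism, so the regular action just constructed and the given birational action of $A$ on $Y$ restrict there to one and the same thing, namely the action pulled back from $\bA^3/H$; hence the birational action agrees with the regular one and is in particular regular.

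For the equivalence, write $\psi_a \colon \scM_\theta \to \scM_\theta$ for the automorphism attached to $a \in A$. By construction $\scU_{\psi_a(m)} \cong (a^{-1})^*\scU_m$, so functoriality of pullback shows that $(\psi_a \times a)^*\scU$ has the same fibres as $\scU$; the universal property of the fine moduli space together with the chosen normalization then forces a canonical isomorphism $(\psi_a \times a)^*\scU \cong \scU$. These isomorphisms satisfy the cocycle condition and are compatible with the $H$-equivariant structure of $\scU$, so $\scU$ acquires a $G=(A\times H)$-linearization on $\scM_\theta \times \bA^3$. Consequently $\Phi_\scU$ intertwines the $A$-actions, and passing to $A$-equivariant objects yields
\[
D^A(Y) \;\cong\; D^A\!\bigl(D^H(\bA^3)\bigr) \;\cong\; D^{A\times H}(\bA^3) \;=\; D^G(\bA^3),
\]
the middle identification being valid because the $A$- and $H$-actions on $\bA^3$ commute.

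The step I expect to be the main obstacle is the third one: one must fix the normalization of the universal family carefully enough that the $A$-action linearizes it honestly, not merely projectively, and then check that equivariantizing an equivalence of Fourier--Mukai type again gives an equivalence. Both are routine in principle but are exactly the points that require care; they can be handled by the methods of \cite{BKR} and \cite[Theorem 4.1]{Ishii-Ueda}. A minor point to confirm along the way is that Yamagishi's isomorphism $Y \cong \scM_\theta$ respects the morphisms to $\bA^3/H$, which is what makes the two $A$-actions on $Y$ match in the second step.
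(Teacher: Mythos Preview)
Your proposal is correct and follows essentially the same approach as the paper: realize $Y$ as $\scM_\theta$ via Yamagishi's theorem, obtain the regular $A$-action from the moduli interpretation, normalize the universal family by trivializing its $\rho_0$-component so that the a priori twisted $A$-action on $\scU$ becomes an honest $(A\times H)$-linearization, and then conclude by the argument of \cite[Theorem~4.1]{Ishii-Ueda}. The paper is only slightly more explicit at the normalization step, writing $(a\times\id)^*\scU \cong (\id\times a^{-1})^*\scU \otimes p_Y^*L_a$ and observing that the normalization $\scU_{\rho_0}\cong\scO_Y$ forces $L_a\cong\scO_Y$ canonically, which is exactly the ``honest versus projective'' issue you flagged.
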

\begin{proof}
The condition $G=A \times H$ implies that $A \cap H=\{1\}$ and that an elements of $A$ and $H$ are commutative.
For an element $a \in A$ and an $H$-constellation $E$ on $\bA^3$,  $a^*E$ is also an $H$-constellation by the commutativity of elements.
Moreover, for any stability parameter $\theta$, $E$ is $\theta$-stable if and only if so is $a^*E$.
In particular, for any stability parameter $\theta$, $A$ acts on the moduli space $\scM_\theta$.
Now, as proved by Yamagishi \cite{MR4871720}, $Y$ is isomorphic to $\scM_\theta$ for some $\theta$.
Thus there is a regular action of $A$ on $Y$.

For the derived equivalence, consider the universal family $\scU$ of $\theta$-stable $H$-constellations.
Here $\scU$ is an $H$-equivariant coherent sheaf on $Y \times \bA^3$ flat over $Y$
such that for any $y \in Y$ the fiber over $y$ is the $H$-constellation corresponding to $y$.
Its direct image to $Y$ decomposes as
\[
p_{Y*}\scU =\bigoplus_{\rho \in \Irrep H}\scU_\rho \otimes_{\bC} \rho
\]
for some locally free sheaves $\scU_\rho$ of rank $\dim \rho$ on $Y$,
where $\Irrep H$ denotes the set of the irreducible representations of $H$
and  where $p_Y$ is the projection $Y \times \bA^3 \to Y$.
The locally free sheaves $\scU_\rho$ are called the tautological bundles.
Notice that the universal family is determined up to tensor products with line bundles on $Y$.
In fact, the action of $a \in A$ on $Y$ is determined by
\[
(a \times \id_{\bA^3})^*\scU \cong (\id_{Y} \times a^{-1})^*\scU \otimes p_Y^*L_a
\]
for some line bundle $L_a$ on $Y$.
We take the universal family $\scU$ satisfying the normalization condition $\scU_{\rho_0} \cong \scO_Y $
for the trivial representation $\rho_0$.
Fix an isomorphism $u: \scU_{\rho_0} \overset{\sim}{\to} \scO_Y$.
Then, for any $a \in A$, $L_a$ is canonically isomorphic to $\scO_Y$ and we see
\[
(a \times a)^* \scU \cong (\id_Y \times a)^*(a \times \id_{\bA^3})^*\scU \cong (\id_Y \times a)^*(\id_Y \times a^{-1})^*\scU \cong \scU.
\]
This defines an isomorphism of $H$-equivariant sheaves
\[
\lambda_a: \scU\overset{\sim}{\to}  (a\times a)^*\scU
\]
such that its $\rho_0$ part $(\lambda_a)_{\rho_0}$ coincides with $(a^*u)^{-1}\circ u$.
Then it is easy to see that the collection $(\lambda_a)_{a \in A}$ defines an $A$-equivariant action on the $H$-equivariant sheaf $\scU$.
In other words, $\scU$ has a structure of an $A\times H$-equivariant sheaf on $Y \times \bA^3$.
Now the equivalence in the statement follows by the same argument as in \cite[Theorem 4.1]{Ishii-Ueda}.
\end{proof}
\subsection{Semiorthogonal decompositions}\label{subsec:sod}
We first recall the definition.
For a triangulated category $\scD$, a semiorthogonal decomposition of $\scD$ consists of full triangulated subcategories
$\scC_1, \dots, \scC_m$ such that
\begin{itemize}
\item For any $C_i \in \scC_i$ and $C_j \in \scC_j$ with $i>j$, $\Hom(C_i, C_j)=0$.
\item For any object $F$ in $\scD$, there are objects $C_i \in \scC_i$ and $F_2, \dots, F_m \in \scD$ with distinguished triangles
$F_{i+1} \to F_i \to C_i \to F_{i+1}[1]$ for $i=1,\dots m$ where $F_1:=F$ and $F_{m+1}:=0$.
\end{itemize}
We write
\[
\scD=\langle \scC_1, \dots, \scC_m \rangle
\]
for a semiorthogonal decomposition.
By abuse of notation, we write $\scD \cong \langle \scC_1, \dots, \scC_m \rangle$
even when the embedding functors of $\scC_i$ into $\scD$ are not specified.
\begin{remark}
One can mutate a semiorthogonal decomposition if the subcategories are admissible \cite{BK}.
In fact, all the subcategories appearing in this article are admissible.
If the new subcategories created by mutations are again admissible, one can repeat mutations
and permute the components in an arbitrary order. 
However, the authors did not check this condition and
we do not freely permute components of semiorthogonal decompositions in this article.
\end{remark}

For a smooth subvariety $Z$ of a smooth quasiprojective variety $X$ with codimension $c$, let $\Xtilde$ be the blowup of $X$ along $Z$.
Then \cite[Theorem 4.3]{Orlov_SOD} states that there is a semiorthogonal decomposition
\begin{equation}\label{eq:blowup}
D(\Xtilde) \cong \langle D(Z), \dots, D(Z), D(X) \rangle
\end{equation}
in which $D(Z)$ appears $c-1$ times.

For a smooth divisor $\scD$ on a smooth Deligne-Mumford stack $\scX$,
there is a notion of the $r$-th root stack \cite{Cadman} \cite{AGV} $\scY=\sqrt[r]{(\scO_{\scX}(\scD), 1)}\to \scX$
which is an isomorphism on $\scX\setminus \scD$ and which is ramified along $\scD$ with ramification index $r$.
Then by \cite[Theorem 1.6]{Ishii-Ueda} or \cite[Theorem 4.7]{BLS}, there is a semiorthogonal decomposition
\begin{equation}\label{eq:sod}
D(\scY)\cong \langle D(\scD), \dots, D(\scD), D(\scX) \rangle
\end{equation}
in which $D(\scD)$ appears $r-1$ times.
If $r=2$ as we consider in this article, then the semiorthogonal decomposition is $D(\scY)\cong \langle D(\scD), D(\scX) \rangle$.
\section{Semiorthogonal decompositions for complex reflection groups of rank $2$}\label{sec:2-dim}

Let $G \subset \GL(2, \bC)$ be a finite subgroup, let $\pi:\bA^2 \to X:=\bA^2/G$ be the quotient morphism, and let $Y \to X$ be the minimal resolution.
Define a $\bQ$-divisor $B$ on $X$ such that $\pi^*(K_X+B)=K_{\bA^2}$.
Then $B$ is of the form
\[
B=\sum_{i=1}^r \frac{m_i-1}{m_i}D_i,
\]
where $D_i=\pi(L_i)$ for some line $L_i \subset \bA^2$ with $\{g \in G \mid g |_{L_i}=\id\}\cong \bZ/m_i\bZ$.
Under this setting, Kawamata proved the following:
\begin{theorem}[\cite{kawamata_toric_III} Theorem1.4]\label{thm:kawamata}
There is a semiorthogonal decomposition
\begin{equation}\label{eq:kawamata}
D^G(\bA^2) \cong \langle D(Z_1), \dots, D(Z_m), D(Y) \rangle
\end{equation}
where $Z_i$ are isomorphic to either $\bA^0$ or $\bA^1$ and
the number of times $D(\bA^1)$ appears is $\sum_{i=1}^r (m_i-1)$.
\end{theorem}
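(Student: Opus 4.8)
The plan is to reduce everything to the structure of the minimal resolution $Y \to X$ and the associated Deligne--Mumford stack, then apply the semiorthogonal decompositions recalled in \S\ref{subsec:sod}. First I would introduce the canonical (smooth) Deligne--Mumford stack $\scX$ associated with the logarithmic pair $(X, B)$: since $B=\sum_{i=1}^r \frac{m_i-1}{m_i}D_i$, this is obtained from $Y$ (or directly from a suitable orbifold structure) by taking $m_i$-th root stacks along the components $\widetilde{D}_i$ of the strict transform of $D_i$, so that $\scX$ is a smooth DM stack whose coarse space carries the pair $(X,B)$. The key input is the two-dimensional derived McKay correspondence in the presence of pseudoreflections: for $G \subset \GL(2,\bC)$ there is an equivalence $D^G(\bA^2) \cong D(\scX)$, where $\scX$ is the canonical stack of the minimal resolution together with the root-stack structure along the branch divisor. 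This is exactly the content of Kawamata's toric approach (or equivalently of the Ishii--Ueda stacky McKay correspondence in dimension two); I would cite \cite{kawamata_toric_III} for this equivalence and treat it as the black box furnished by the ambient theory.

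Granting the equivalence $D^G(\bA^2) \cong D(\scX)$, the remaining work is purely a matter of unwinding two kinds of semiorthogonal decompositions. On the one hand, $Y \to X$ is a composite of blowups of smooth points of smooth surfaces, so by iterating \eqref{eq:blowup} (each blowup of a point in a surface has codimension $c=2$ and contributes a single copy of the derived category of the point, i.e. $D(\bA^0)$) one gets
\[
D(Y) \cong \langle D(\bA^0), \dots, D(\bA^0), D(X) \rangle,
\]
with one copy of $D(\bA^0)$ for each blown-up point. On the other hand, passing from $Y$ to $\scX$ amounts to taking successive $m_i$-th root stacks along the smooth divisors $\widetilde{D}_i$ (which are smooth rational curves, hence isomorphic to $\bA^1$ in the relevant affine charts, or more precisely each $\widetilde{D}_i \cong \bA^1$ after the resolution); applying \eqref{eq:sod} to each such root stack introduces $m_i-1$ copies of $D(\widetilde{D}_i) \cong D(\bA^1)$. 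Stacking these decompositions and substituting into $D^G(\bA^2)\cong D(\scX)$ yields a semiorthogonal decomposition of $D^G(\bA^2)$ into copies of $D(\bA^0)$, exactly $\sum_{i=1}^r(m_i-1)$ copies of $D(\bA^1)$, and one copy of $D(Y)$; absorbing the $D(\bA^0)$'s and $D(Y)$ into the list of $D(Z_i)$'s gives the stated form \eqref{eq:kawamata}, with the count of $D(\bA^1)$-factors being $\sum_{i=1}^r(m_i-1)$ as claimed.

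I expect the main obstacle to be bookkeeping of the order in which the root-stack constructions and the blowups are performed, so that the divisors $\widetilde{D}_i$ one takes roots along are genuinely smooth and disjoint-normal-crossing at each stage, and so that the semiorthogonal decompositions \eqref{eq:blowup} and \eqref{eq:sod} can be applied iteratively without admissibility issues (as flagged in the Remark after \eqref{eq:sod}, one must be careful not to reorder components). A secondary point requiring care is identifying precisely which stacky structure on $Y$ reproduces $D^G(\bA^2)$ — i.e. checking that Kawamata's $\scX$ is the root stack along the strict transform of $D$ with the weights $m_i$ dictated by the pointwise stabilizers — but this is exactly the normalization built into Theorem \ref{thm:kawamata} of \cite{kawamata_toric_III}, so in the present write-up it can be invoked rather than reproved. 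Everything else is a routine concatenation of the recalled decompositions, together with the elementary observation that each exceptional curve and each component of the strict transform of $D$ is, in the minimal resolution of a quotient surface singularity, a smooth rational curve meeting the rest of the configuration transversally.
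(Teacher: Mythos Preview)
The paper does not give an independent proof of this statement: it is quoted from \cite[Theorem~1.4]{kawamata_toric_III}, and the only addition is a pointer to the places in Kawamata's argument (\cite[p.~205]{kawamata_toric_III} and the proof of \cite[Theorem~1.1(a)]{kawamata_toric_III}) from which one reads off that the positive-dimensional summands arise precisely in the steps that lower the coefficients of $B$, giving the count $\sum_i(m_i-1)$. Kawamata's method is a run of the toric MMP on the pair, not a comparison with a root stack.

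Your proposed independent argument has a genuine gap. The equivalence $D^G(\bA^2)\cong D(\scX)$, with $\scX$ the iterated root stack of $Y$ along the strict transforms $\widetilde D_i$ with multiplicities $m_i$, is false for general $G\subset\GL(2,\bC)$. Take $G=\langle\diag(i,i)\rangle\cong\bZ/4\bZ$: there are no pseudoreflections, so $B=0$ and your $\scX$ is just $Y$, the minimal resolution of the $\tfrac14(1,1)$ singularity; but $K^G(\bA^2)\cong\bZ^4$ while $K(Y)\cong\bZ^2$, so $D^G(\bA^2)\not\cong D(Y)$. What is missing are exactly the $D(\bA^0)$ summands in \eqref{eq:kawamata}: in this example the exceptional curve of $Y$ lies in the branch locus of $Y_H\to Y$ (with $H=G\cap\SL(2,\bC)$), so the correct stack $[Y_H/(G/H)]$ carries extra root structure along a compact $\bP^1$, contributing two copies of $D(\bA^0)$. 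Your $\scX$ omits this, and the black box you want to cite from \cite{kawamata_toric_III} is the semiorthogonal decomposition \eqref{eq:kawamata} itself, not an equivalence with a root stack. Separately, your claim that $Y\to X$ is a composite of blowups of smooth points of smooth surfaces is both unnecessary for the statement (which keeps $D(Y)$ intact) and false whenever $X=\bA^2/G$ is singular, i.e.\ whenever $G$ is not a reflection group.
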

Here the number of copies of $D(\bA^1)$ in \eqref{eq:kawamata} is not stated in \cite[Theorem 1.4]{kawamata_toric_III} but
it follows from the proof;
it is stated in \cite[p.205]{kawamata_toric_III} that the positive dimensional subvarieties appear in the process to decrease the coefficients of $B$,
and the number is counted in the proof of \cite[Theorem 1.1(a)]{kawamata_toric_III}.
Notice that the number $\sum_{i=1}^r (m_i-1)$ coincides with the number of conjugacy classes of pseudoreflections in $G$.
\begin{corollary}\label{cor:kawamata}
Conjecture \ref{conj:pvdb} is true for complex reflection groups of rank $2$.
\end{corollary}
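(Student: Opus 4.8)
The plan is to read the corollary straight off Kawamata's Theorem~\ref{thm:kawamata}, matching the components of that semiorthogonal decomposition with the conjugacy classes of $G$ according to the dimensions of the fixed loci.

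First I would note that since $G\subset\GL(2,\bC)$ is a complex reflection group, the theorem of Chevalley, Shephard and Todd gives $X=\bA^2/G\cong\bA^2$; in particular $X$ is smooth, so the minimal resolution $Y\to X$ of Theorem~\ref{thm:kawamata} is the identity and $Y\cong\bA^2$. Hence that theorem reads
\[
D^G(\bA^2)\cong\langle D(Z_1),\dots,D(Z_m),D(\bA^2)\rangle,
\]
where each $Z_i$ is $\bA^0$ or $\bA^1$, and exactly $\sum_i(m_i-1)$ of the $Z_i$ are isomorphic to $\bA^1$; as recalled after Theorem~\ref{thm:kawamata}, this number equals the number of conjugacy classes of pseudoreflections of $G$. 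On the group side, because the action is linear, each $g\in G$ has $(\bA^2)^g$ equal to $\bA^2$ (only if $g=e$), to a line $\cong\bA^1$ (exactly when $g$ is a pseudoreflection), or to $\{0\}$ (otherwise), and $C(g)$ acts linearly on $(\bA^2)^g$; so $(\bA^2)^g/C(g)$ is $\bA^2/G\cong\bA^2$, a quotient of $\bA^1$ by a finite cyclic group and hence $\cong\bA^1$, or $\bA^0$, respectively. In particular $(\bA^2)^g/C(g)$ is always smooth, so the hypothesis of Conjecture~\ref{conj:pvdb} holds, and it suffices to produce a bijection between the $m+1$ components above and $\Conj G$ under which paired pieces are isomorphic.

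Grouping the conjugacy classes by $\dim(\bA^2)^g\in\{0,1,2\}$, the single component $D(\bA^2)=D(Y)$ is paired with the unique class $[e]$, the components $D(\bA^1)$ are paired with the conjugacy classes of pseudoreflections (the counts matching by Theorem~\ref{thm:kawamata}), and so the only thing left to check is that the number of components $D(\bA^0)$ equals the number of classes $[g]$ with $(\bA^2)^g=\{0\}$; equivalently, that the total number of components satisfies $m+1=|\Conj G|$. I would get this from $K_0$: a semiorthogonal decomposition gives $K_0\bigl(D^G(\bA^2)\bigr)\cong\bigoplus_i K_0(\scC_i)$ and $K_0\bigl(D(\bA^j)\bigr)\cong\bZ$ for each affine space, so $K_0\bigl(D^G(\bA^2)\bigr)$ is free of rank $m+1$; on the other hand $\bA^2$ with its linear $G$-action retracts equivariantly onto the origin, whence $K_0\bigl(D^G(\bA^2)\bigr)\cong R(G)$, the representation ring, which is free of rank $|\Conj G|$. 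This forces all the remaining multiplicities to agree, and assembling the pieces yields $D^G(\bA^2)\cong\langle\scC_{[g]}\rangle_{[g]\in\Conj G}$ with $\scC_{[g]}\cong D\bigl((\bA^2)^g/C(g)\bigr)$. I do not anticipate a real obstacle here; the only mild subtlety is the bookkeeping of fixed-locus dimensions, which is routine for a linear action.
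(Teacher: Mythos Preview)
Your proposal is correct and follows essentially the same approach as the paper: both observe that $Y\cong\bA^2$ by Chevalley--Shephard--Todd, use Theorem~\ref{thm:kawamata} to count the $D(\bA^1)$ components via conjugacy classes of pseudoreflections, and then invoke the isomorphism $K_0(D^G(\bA^2))\cong R(G)$ (the paper cites Bass--Haboush \cite{Bass-Haboush} rather than your equivariant retraction) to force the total number of components to equal $|\Conj G|$, determining the number of $D(\bA^0)$ pieces. Your write-up is slightly more detailed in verifying the smoothness hypothesis of Conjecture~\ref{conj:pvdb} and describing the quotients $(\bA^2)^g/C(g)$, but the logic is identical; note that, as the paper remarks, this argument only establishes a numerical match and does not single out a canonical bijection between the $D(\bA^0)$ components and the individual conjugacy classes.
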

\begin{proof}
We have an isomorphism of the Grothendieck groups
\[
K^G(\bA^2) \cong R(G)
\]
 by Bass-Haboush \cite{Bass-Haboush}, where $R(G)$ is the representation ring of $G$.
Since $K(\bA^i) \cong \bZ$ for $i \ge 0$ and since $Y=X \cong \bA^n$ for a complex reflection group, this implies that the number of the components in the semiorthogonal decomposition \eqref{eq:kawamata}
 coincides with the number of conjugacy classes of $G$.
 Therefore, the number of copies of $\bA^0$ in \eqref{eq:kawamata} coincides with the number of conjugacy classes
 whose fixed point set is $\{0\}$.
 This proves Polischchuk-Van den Bergh conjecture in dimension two.
 \end{proof}
 \begin{remark}
 This proves the coincidence of numbers but it is not clear how to index each component $D(\bA^0)$ by a conjugacy class.
 \end{remark}
 \begin{example}\label{ex:Z_2^2}
 The smallest reflection group in $\GL(2, \bC)$ which is not conjugate to a subgroup
 of $\GL(1, \bC)$ is the following group:
 \[
 G=\{ \diag(\pm 1, \pm 1) \}. 
 \]
 This is conjugate to the group we obtain putting $n=2$ in Example \ref{ex:dihedral} below,
 but we describe this case to use it in \S \ref{subsec:dihedral}.
 The branch divisor $D$ of the quotient morphism  $\bA^2 \to X$ is expressed as $D=D_1+D_2$, where $D_i$ are the coordinate axes of $X=\bA^2/G \cong \bA^2$.
 Put $H:=G \cap \SL(2, \bC) \cong \bZ/2\bZ$, $X_H:=\bA^2/H$ and $Y_H:=H\hilb(\bA^2)$.
 Then $X_H$ is the $A_1$ singularity, $Y_H$ is its minimal resolution,
 and $Y:=Y_H/(G/H)$ is the maximal resolution of $(X, \frac{1}{2}D)$.
 Since $Y$ is the blowup of $X$ along the origin,
 we have a semiorthogonal decomposition
 \[
 D(Y) \cong \langle D(\bA^0), D(X) \rangle.
 \]
 The strict transform $\Dtilde=\Dtilde_1+\Dtilde_2$ of $D$ in $Y$ is smooth
 and the quotient stack $[Y/(G/H)]$ is isomorphic to the $2$nd root stack
 of $Y$ along $\Dtilde$.
This induces a semiorthogonal decomposition
\[
D^{G/H}(Y_H) \cong \langle D(\bA^1), D(\bA^1), D(Y)\rangle.
\] 
Finally, \eqref{eq:equivFM} implies
\[
D^G(\bA^2) \cong D^{G/H}(Y_H).
\]
Summarizing the above decompositions, we obtain
\[
D^G(\bA^2) \cong \langle D(\bA^1), D(\bA^1), D(\bA^0), D(X) \rangle.
\]
One can see that this decomposition can be actually indexed by conjugacy classes of $G$.
 \end{example}
 \begin{example}\label{ex:dihedral}
For $n \ge 3$, we consider the dihedral group
 \[
D_{2n}:=\left\langle\begin{pmatrix} \zeta_n & 0 \\ 0 & \zeta_n^{-1} \end{pmatrix}, \begin{pmatrix} 0 & 1 \\ 1 &0 \end{pmatrix} \right\rangle \subset \GL(2, \bC)
\]
where $\zeta_n=\exp{\frac{2\pi\sqrt{-1}}{n}}$.
This case was studied by Potter \cite{potter} and further by Capellan \cite{capellan}.
Put $H:=G\cap \SL(2, \bC)=\left\langle\begin{pmatrix} \zeta_n & 0 \\ 0 & \zeta_n^{-1} \end{pmatrix}\right\rangle$
and $Y_H:=H\hilb(\bA^2)$.
Then $Y_H$ is the minimal resolution of the quotient singularity $X_H:=\bA^2/H$ of type $A_{n-1}$ and there are $n-1$ exceptional curves $E_1, \dots, E_{n-1}$ such that
$E_{i-1} \cap E_i$ consists of a point. 
Connected components of the fixed point locus of the action of $G/H$ on $Y_H$
are of codimension one as in Lemma \ref{lem:max} (1).
Moreover, the action of $G/H$ exchanges the strict transforms in $Y_H$ of the images of the two coordinate axes of $\bA^2$, one intersecting $E_1$
and the other intersecting $E_{n-1}$.
Then, it also exchanges  $E_i$ with $E_{n-i}$,
and no exceptional curve $E_i$ is contained in the fixed point set $(Y_H)^{G/H}$.
Therefore, $(Y_H)^{G/H}$ coincides with the strict transform $\Dtilde \subset Y$ of the discriminant divisor $D \subset X$.
This implies that $Y:=Y_H/(G/H)$ is smooth and is obtained by blowing up $X=\bA^2/G\cong\bA^2$ along $\lfloor \frac{n}{2} \rfloor$ points. 
Therefore, we obtain a semiorthogonal decomposition
\[
D(Y) \cong \langle D(\bA^0), \dots, D(\bA^0), D(X) \rangle
\]
in which $D(\bA^0)$ appears $\lfloor \frac{n}{2} \rfloor$ times.
The quotient stack $[Y_H/(G/H)]$ is the $2$nd root stack of $Y$ along the smooth divisor $\Dtilde\subset Y$, and
 $\Dtilde$ has one ($n$:odd) or two($n$:even) connected components isomorphic to $\bA^1$.
 This leads to the semiorthogonal decomposition
 \[
 D([Y_H/(G/H)]) \cong \begin{cases}
 							\langle D(\bA^1), D(Y) \rangle & \text{($n$:odd)}\\
							\langle D(\bA^1), D(\bA^1), D(Y) \rangle & \text{($n$:even)}.
\end{cases}
 \]
 Finally, \eqref{eq:equivFM} implies an equivalence $D^G(\bA^2) \cong D([Y_H/(G/H)])$.
Summarizing these decompositions, we obtain
\[
D^G(\bA^2) \cong \begin{cases}
 							\langle D(\bA^1), D(\bA^0), \dots, D(\bA^0), D(Y) \rangle & \text{($n$:odd)}\\
							\langle D(\bA^1), D(\bA^1), D(\bA^0), \dots, D(\bA^0), D(Y) \rangle & \text{($n$:even)}
						\end{cases}
\]
where $D(\bA^0)$ appears $\lfloor \frac{n}{2} \rfloor$ times.
It is easy to see that the number of copies of $D(\bA^i)$ in this decomposition
coincides with the number of conjugacy classes $[g]$ with $(\bA^2)^g \cong \bA^i$.
\end{example}
\section{Real reflection groups of rank $3$}\label{sec:main}
\subsection{Classification of real reflection groups of rank $3$}\label{subsec:classify}
The classification of real reflection groups up to conjugacy in $\GL(3)$ (see \cite{Coxeter} or \cite[19.4]{BB} for example) is as follows:
\begin{enumerate}
\item The diagonal subgroups $\bZ/2\bZ$, $(\bZ/2\bZ)^2$, $(\bZ/2\bZ)^3$.
\item The dihedral group $D_{2n}$ of order $2n$ ($n \ge 3$).
\item $D_{2n} \times \bZ/2\bZ$ ($n \ge 3)$.
\item The symmetry group of a tetrahedron (we call it the tetrahedral group).
\item The symmetry group of an octahedron (we call it the octahedral group).
\item The symmetry group of an icosahedron (we call it the icosahedral group).
\end{enumerate}
The groups $\bZ/2\bZ$, $(\bZ/2\bZ)^2$, and (2) are subgroups of $\GL(2)$, and maximal resolutions and semiorthogonal decompositions
can be obtained by applying the construction in the rank two cases, where we replace each variety $V$ with $V \times \bA^1$.
Thus we consider the remaining five cases one by one.
For the groups $(\bZ/2\bZ)^3$,  (3), (4) we use iterated Hilbert schemes
and for the groups  (5), (6) we use Proposition \ref{prop:yamagishi}.

\begin{remark}
The groups  (1) can be regarded as special cases of (2) or (3).
However, since these cases are easier to describe by the toric nature of the associated varieties,
we treat the case of $(\bZ/2\bZ)^3$ separately.
\end{remark}

\begin{remark}
The tetrahedral group (5) and the octahedral group (6) are Weyl groups of type $A$ and $B$ respectively.
Thus, Theorem \ref{thm:sod} for these groups follows from  \cite[Theorem C]{PVdB}.
However, it doesn't imply Theorem \ref{thm:equivalence} for these groups.
\end{remark}
\subsection{The case of $(\bZ/2\bZ)^3$}\label{subsec:abelian}
Suppose $G$ is the diagonal subgroup
\[G=\{\diag(\pm 1, \pm 1, \pm 1)\}.\]
In this case,
\[
H=G \cap \SL(3, \bC)\cong (\bZ/2\bZ)^2
\]
and $G$ decomposes as $G=H \times \langle -E \rangle$.
We further consider the subgroup
\[
K:=\langle \diag (1,-1,-1) \rangle \subset H
\]
and put
\begin{equation}\label{eq:xyhk}
\begin{aligned}
X_K&:=\bA^3/K,& Y_K&:=K\hilb(\bA^3) \\
X_H&:=\bA^3/H &Y_H&:=(H/K)\hilb(Y_K) \\
X&:=\bA^3/G\cong \bA^3 & Y&:=Y_H/(G/H).
\end{aligned}
\end{equation}
to obtain the following diagram.
\begin{equation}\label{eq:zushiki_K}
\begin{tikzpicture}[auto]
\node (m) at (0,0) {$\bA^3$}; \node[right=2 of m.center] (xk)  {$X_K$}; \node[right=2 of xk.center] (xh) {$X_H$};
\node[right=3 of xh.center] (x) {$X$};
\node[above=1 of x0.center] (yk) {$Y_K$}; \node[above=1 of xh.center] (w1) {$Y_K/(H/K)$};\node[above=1 of x.center] (w2) {$Y_K/(G/K)$};
\node[above=1 of w1.center] (yh) {$Y_H$};
\node[above=1 of w2.center] (y) {$Y$};
\draw[->] (m) to node {$\pi_K$} (xk);
\draw[->] (xk) to node {$\pi_{H/K}$} (xh);
\draw[->] (xh) to node {$\pi_{G/H}$} (x);
\draw[->] (yk) to node {$\pi'_{H/K}$} (w1);
\draw[->] (w1) to node {$\pi'_{G/H}$} (w2);
\draw[->] (yh) to (w1);
\draw[->] (y) to node[swap] {$f_2$}(w2);
\draw[->] (yk) to node[swap] {$f_K$} (xk);
\draw[->] (w1) to (xh);
\draw[->] (w2) to node[swap] {$f_1$} (x);
\draw[->] (yh) to node {$\pi''_{G/H}$}(y);
\end{tikzpicture}
\end{equation}
Then $Y_H$ is a $(G/H)$-equivariant crepant resolution of $X_H$,
$Y$ is a maximal $\bQ$-factorial terminalization of $(X, \frac{1}{2}D)$
for the discriminant divisor $D \subset X$ by Lemma \ref{lem:max},
and we have an equivalence
\begin{equation}\label{eq:GG/HY_H}
D^G(\bA^3) \cong D^{G/K}(Y_K) \cong D^{G/H}(Y_H).
\end{equation}
\begin{figure}[h]
 \begin{tikzpicture}
  \draw (0,0) node[left]{$e_1$} -- (0:2) node[right]{$e_2$}-- ++(120:2) node[above]{$e_3$} -- ++(240:2)  -- cycle;
   \draw (1,0) -- ++(60:1) -- ++(180:1);
     \draw (0,0)-- ++(30:sqrt 3);
 \end{tikzpicture}
 \caption{Toric picture for $Y_H$}\label{fig:toric_Y}
 \end{figure}
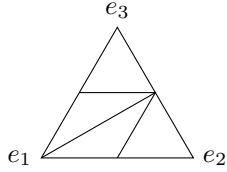
 The toric variety $Y_H$ corresponds to the fan determined by Figure \ref{fig:toric_Y}
 whose $N$-lattice is $\{(a, b, c) \in (\frac{1}{2}\bZ)^3 \mid a+ b+c \in \bZ\}$.
Its quotient $Y$ is determined by the same picture by replacing the $N$-lattice with $(\frac{1}{2}\bZ)^3$. Then one can see that $Y$ is also smooth and hence it is a maximal resolution of $(X, \frac{1}{2}D)$. 
 Let $D_i \subset X$ be the image of the coordinate hyperplane $\{(x_1, x_2, x_3) \in \bA^3 \mid x_i=0\} \subset \bA^3$
for $i=1, 2, 3$ and let  $\Dtilde_i \subset Y$ be the strict transform of $D_i$.
Then we see that $D=D_1+D_2+D_3$ with
$\Dtilde_2 \cong \Dtilde_3 \cong \bA^2$
and $\Dtilde_1 \cong \widetilde{\bA^2}$, the one-point blowup of $\bA^2$
This establishes Theorem \ref{thm:equivalence} (2)(3).
Moreover, one can see
\begin{itemize}
\item The double covering $\pi''_{G/H}:Y_H \to Y$ is ramified along $\Dtilde_1 \cup \Dtilde_2 \cup \Dtilde_3$ and hence
the stack $\scY$ in Theorem \ref{thm:equivalence} (4) is the quotient stack $[Y_H/(G/H)]$.
Thus the equivalence \eqref{eq:mckay_for_reflection} follows from \eqref{eq:GG/HY_H}.
Moreover, there is a semiorthogonal decomposition
\[
D^{G/H}(Y_H) \cong \langle D(\Dtilde_1), D(\Dtilde_2), D(\Dtilde_3), D(Y)\rangle.
\]
Since the derived category $D(\Dtilde_3)$ decomposes as $D(\Dtilde_3)\cong \langle D(\bA^0), D(\bA^2) \rangle$,
we obtain
\begin{equation}\label{eq:decompose_DG/H}
D^{G/H}(Y_H) \cong \langle D(\bA^0), D(\bA^2), D((\bA^2), D(\bA^2), D(Y)\rangle.
\end{equation}
\item The morphism $f:Y \to X$ is a composite of three blowups along affine lines ($f_1$ is a single blowup and $f_2$ is a composite of two blowups); this gives Theorem \ref{thm:sod} (1).
Thus $D(Y)$ decomposes as
\begin{equation}\label{eq:decompose_DY}
D(Y)\cong \langle D(\bA^1), D(\bA^1), D(\bA^1), D(X)\rangle.
\end{equation}
\end{itemize}
The equivalence \eqref{eq:GG/HY_H} and the decompositions \eqref{eq:decompose_DG/H}, \eqref{eq:decompose_DY} give the following semiorthogonal decomposition of $D^G(\bA^3)$:
\[
D^G(\bA^3) \cong \langle D(\bA^0), D(\bA^2), D(\bA^2), D(\bA^2), D(\bA^1), D(\bA^1), D(\bA^1), D(X) \rangle
\]
which is Theorem \ref{thm:sod} in this case.
\begin{remark}\label{rem:Z_2^3}
If we use $Y'_H:=H\hilb(\bA^3)$ instead of $Y_H$, then
the fixed point set $(Y'_H)^{G/H}$ consists of three divisors
corresponding to $e_1, e_2, e_3$ and the torus fixed point
corresponding to the central $3$-dimensional cone.
In particular, the quotient $Y':= Y'_H/(G/H)$ has a terminal quotient singularity
and we should consider the canonical stack $\frak Y'$ for the derived category.
Notice that the branch divisors of $Y'_H \to Y'$ are all isomorphic to $\bA^2$
and we obtain
\[
D^{G/H}(Y'_H) \cong \langle D(\bA^2), D(\bA^2), D(\bA^2), D(\frak Y') \rangle
\]
which is different from \eqref{eq:decompose_DG/H}.
Instead, the rational map $Y' \dasharrow Y$ is a toric flip and it provides a semiorthogonal decomposition
$D(\frak Y')\cong \langle D(\bA^0), D(Y)\rangle$ as proved by Kawamata \cite{kawamata_I}.
In this way, we can also obtain a semiorthogonal decomposition in which the order of the subcategories changes.
\begin{figure}[h]
 \begin{tikzpicture}
  \draw (0,0) node[left]{$e_1$} -- (0:2) node[right]{$e_2$}-- ++(120:2) node[above]{$e_3$} -- ++(240:2)  -- cycle;
   \draw (1,0) -- ++(60:1) -- ++(180:1) -- ++(300:1);
 \end{tikzpicture}
 \caption*{Toric picture for $H\hilb(\bA^3)$}
 \end{figure}
\end{remark}
\subsection{The case of $D_{2n}\times \bZ/2\bZ$}\label{subsec:dihedral}
Suppose
\[
G=D_{2n}\times \{\pm 1\} \subset \GL(2, \bC) \times \GL(1, \bC),
\]
where
$D_{2n}$ is the dihedral group of order $2n$ in Example \ref{ex:dihedral}.
We consider the subgroups
\begin{align*}
H&=G\cap \SL(3, \bC)=\left\langle
			\begin{pmatrix} \zeta_n & 0 & 0 \\ 0 & \zeta_n^{-1} &0 \\ 0&0&1\end{pmatrix},
			\begin{pmatrix} 0 & 1 & 0 \\ 1 & 0 & 0 \\ 0 & 0 & -1 \end{pmatrix}
			\right\rangle \cong D_{2n}
\\
K&:=\left\langle \begin{pmatrix} \zeta_n & 0 & 0 \\ 0 & \zeta_n^{-1} &0 \\ 0&0&1\end{pmatrix} \right\rangle
\subset H
\end{align*}
and define $X_K, Y_K, X_H, Y_H, X, Y$ as in \eqref{eq:xyhk}
to obtain the same diagram \eqref{eq:zushiki_K} where $Y$ becomes a maximal $\bQ$-factorial terminalization of $(X, \frac{1}{2}D)$.
We again have an equivalence
\begin{equation}\label{eq:dihedral_0}
D^G(\bA^3) \cong D^{G/K}(Y_K) \cong D^{G/H}(Y_H).
\end{equation}
By regarding $K$ as a subgroup of $\SL(2, \bC)$, we define $Y_K':=K\hilb(\bA^2)$.
Then $Y_K$ decomposes as
\[
Y_K=Y_K'\times \bA^1.
\]
We consider the action of $G/K \cong (\bZ/2\bZ)^2$ on $Y_K$.
The group $G/K$ is generated by the two elements
\[
\alpha:=\begin{pmatrix} 0 & 1 & 0 \\ 1 & 0 & 0 \\ 0 & 0 & 1\end{pmatrix}, \quad
\beta:=\diag(1, 1, -1)
\]
and the actions of these elements on $Y_K$ are products of actions on $Y'_K$ and $\bA^1$ which are described as follows:
\begin{itemize}
\item
The fixed point locus of $\alpha$ in $Y_K'$ is isomorphic to $\bA^1$ when $n$ is odd and isomorphic to the disjoint union of two copies of $\bA^1$ when $n$ is even.
The action of $\alpha$ on the second factor $\bA^1$ of $Y_K$ is trivial.
\item
$\beta$ acts trivially on $Y'_K$ and acts as a reflection on $\bA^1$.
\end{itemize}
In particular, the quotient of $Y_K$ by $H/K \cong \langle \alpha \beta \rangle\cong \bZ/2\bZ$ contains a family of $A_1$-singularities
(along one or two affine lines) and $Y_K/(G/K)$ is smooth.
The construction of $Y$ from the $(G/K)$-action on $Y_K$ is locally isomorphic to
the construction in Example \ref{ex:Z_2^2}, up to a trivial one-dimensional factor.
Therefore, the threefold $Y$ is smooth and obtained by blowing up $Y_K/(G/K)$ along one or two affine lines.
Thus we obtain
\begin{equation}\label{eq:dihedral_1}
D(Y)\cong
		\begin{cases} \langle D(\bA^1), D(Y_K/(G/K) \rangle & \text{($n$:odd)}\\
					 \langle D(\bA^1), D(\bA^1), D(Y_K/(G/K) \rangle & \text{($n$:even)}.
		\end{cases}
\end{equation}

We next describe the morphism $Y_K/(G/K) \to X=\bA^3/G$.
Since $G$ decomposes as $G=D_{2n}\times\{\pm 1\}$, 
we have $\bA^3/G=\bA^2/D_{2n} \times \bA^1/\langle \beta \rangle$
and $Y_K/(G/K) = (Y'_K/\langle \alpha \rangle) \times (\bA^1/\langle \beta \rangle)$.
Thus the morphism $Y_K/(G/K) \to X$ is induced from $Y'_K /\langle \alpha \rangle\to (\bA^2/D_{2n})$,
which is the composite of $\lfloor \frac{n}{2} \rfloor$ blowups at points.
Thus the morphism $Y_K/(G/K) \to X$  is the composite of $\lfloor \frac{n}{2} \rfloor$ blowups along affine lines.
Therefore we have a semiorthogonal decomposition
\begin{equation}\label{eq:dihedral_2}
D(Y_K/(G/K) )\cong \langle D(\bA^1), \dots, D(\bA^1), D(X) \rangle
\end{equation}
where $D(\bA^1)$ appears $\lfloor \frac{n}{2} \rfloor$ times.

So far, we have seen Theorem \ref{thm:equivalence} (1) holds in this case.
Finally, we have to describe the strict transform $\Dtilde$ of the discriminant divisor $D$.
We already know that $\Dtilde$ is smooth by Lemma \ref{lem:max} (1)
and that the equivalence \eqref{eq:mckay_for_reflection} holds by \eqref{eq:dihedral_0}.
Since $D$ is the union of the images of reflection hyperplanes in $\bA^3$,
$D$ has two irreducible components if $n$ is odd, and three irreducible components if $n$ is even.
Let $\Dtilde' \subset Y'_K/\langle \alpha \rangle$ be the branch divisor of $Y'_K \to Y'_K/\langle \alpha \rangle$.
Then the strict transform of $D$ to $Y_K/(G/K) \cong (Y'_K/\langle \alpha \rangle)\times \bA^1$ is
\[
\Dbar:=\Dtilde'\times \bA^1 + (Y'_K/\langle \alpha \rangle) \times \{0\},
\] 
and the morphism from $\Dtilde$ to $\Dbar$ induces isomorphisms between irreducible components  by Example \ref{ex:Z_2^2}.
Therefore, one component of $\Dtilde$ is isomorphic to $Y_K'/\langle \alpha \rangle$, which is $\lfloor \frac{n}{2} \rfloor$ points blowup of $\bA^2$ and
the other components are isomorphic to $\bA^2$. 
Hence we obtain
\begin{equation}\label{eq:dihedral_3}
D^{G/H}(Y_H) \cong
	\begin{cases}
		 \langle D(\bA^0), \dots, D(\bA^0), D(\bA^2), D(\bA^2), D(Y) \rangle & \text{($n$:odd)} \\
		 \langle D(\bA^0), \dots, D(\bA^0), D(\bA^2), D(\bA^2), D(\bA^2), D(Y)\rangle & \text{($n$:even)} 
	\end{cases}
\end{equation}
where $D(\bA^0)$ appears $\lfloor \frac{n}{2} \rfloor$ times.

Combining \eqref{eq:dihedral_0}, \eqref{eq:dihedral_1}, \eqref{eq:dihedral_2} and \eqref{eq:dihedral_3},
we obtain
\[
D^G(\bA^3)\cong
		 \langle D(\bA^0), \dots,  D(\bA^0), D(\bA^2), \dots, D(\bA^2), D(\bA^1), \dots, D(\bA^1), D(X)\rangle
 \]
 where the numbers of semiorthogonal components are described as follows:
 \begin{itemize}
 \item If $n$ is odd, then $D(\bA^2)$ appears twice, $D(\bA^1)$ appears $\frac{n+1}{2}$ times, and  $D(\bA^0)$ appears $\frac{n-1}{2}$ times.
 \item If $n$ is even ,then  $D(\bA^2)$ appears three times, $D(\bA^1)$ appears $\frac{n+4}{2}$ times, and  $D(\bA^0)$ appears $\frac{n}{2}$ times.
 \end{itemize}
 On the other hand, one can see that these numbers coincide with the numbers of conjugacy classes $[g]$ of $G$ such that $(\bA^3)^g$ are $\bA^2$, $\bA^1$ or $\bA^0$ respectively.
 Thus Theorem \ref{thm:sod} holds in this case.
 
 \begin{remark}\label{rem:dihedral}
 The $H$-Hilbert scheme $H\hilb(\bA^3)$ is investigated in \cite{GNS2} and
 the fiber of $H\hilb(\bA^3) \to X_H$ over the origin consists of $\frac{n+1}{2}$ curves
 when $n$ is odd and $\frac{n+4}{2}$ curves when $n$ is even (\cite[Table 3]{GNS2}). 
  The families of $G$-clusters parametrized by these curves are described in \cite[3.4, 3.5]{GNS2},
 and one can observe that there are exactly two fixed points by the action of the reflection $\beta$.
Then, the graph in \cite[Table 3]{GNS2} implies that the number of the fixed points on the fiber is $\frac{n+3}{2}$ when $n$ is odd
 and $\frac{n+6}{2}$ when $n$ is even.
 
 Let $\Dhat_i\subset H\hilb(\bA^3)/(G/H)$ denote the strict transform of an irreducible component $D_i$ of $D$.
Since $D_i$ is the image of some reflection hyperplane $L_i \subset \bA^3$ and since the fiber of $L_i \to D_i$ over the origin is a single point, the normalization of $D_i$ has a connected fiber over the origin.
This implies that the fiber of the birational morphism $\Dhat_i \to D_i$ over the origin is also connected.
Then it follows from the discreteness of the $\beta$-fixed points in the fiber of $H\hilb(\bA^3) \to X_H$
that the fiber of $\Dhat_i \to D_i$ over the origin has to be a single point.
Therefore, recalling that  $D$ has $2$ or $3$ irreducible components,
we see by Lemma \ref{lem:max} (1) that the fixed point locus $(H\hilb(\bA^3))^\beta$ contains $\lfloor\frac{n}{2}\rfloor$ isolated points.
In particular, $H\hilb(\bA^3)/(G/H)$ is singular for each $n$.
 \end{remark}
 \subsection{The case of the tetrahedral group}\label{subsec:tetrahedral}
 Let $G \subset \GL(3, \bC)$ be the subgroup consisting of elements that preserve the tetrahedron in $\bR^3$.
 We assume the four vertices of the tetrahedron are $(1, -1, -1)$, $(-1, 1, -1)$, $(-1, -1, 1)$, and $(1,1,1)$.
 Put
\begin{align*}
 K:&=\{\diag ( \pm 1, \pm 1, \pm 1)\} \cap \SL(3, \bC)\cong(\bZ/2\bZ)^2,  \\
 \alpha:&=\begin{pmatrix} 0 & 1 & 0 \\ 1 & 0 & 0 \\ 0 & 0 & 1 \end{pmatrix},\quad
 \gamma:=\begin{pmatrix} 0 & 0 & 1 \\ 1 & 0 & 0 \\ 0 & 1 & 0 \end{pmatrix}.
 \end{align*}
 Then we have $G=\langle K, \gamma, \alpha \rangle$ and
  \[
 H= G \cap \SL(3, \bC) = \langle K, \gamma \rangle \cong A_4.
 \]
 The invariant rings are described as
 \[ \bC[x_1, x_2, x_3]^G=\bC[g_1, g_2, g_3], \quad \bC[x, y, z]^H=\bC[g_1, g_2, g_3, g_4] \]
 where
 \begin{align*}
    g_1&=x_1^2+x_2^2+x_3^2,\\
    g_2&=x_1^2x_2^2+x_2^2x_3^2+x_3^2x_1^2,\\
    g_3&=x_1x_2x_3,\\
    g_4&=(x_1^2-x_2^2)(x_2^2-x_3^2)(x_3^2-x_1^2).
\end{align*}

 We define $X_K, Y_K, X_H, Y_H, X, Y$ as in \eqref{eq:xyhk}  and obtain an equivalence
\begin{equation}\label{eq:tetrahedral_0}
D^G(\bA^3) \cong D^{G/H}(Y_H).
\end{equation}
Since $K$ is an abelian group, $Y_K$ is a toric variety and it is determined by the fan corresponding to Figure \ref{fig:toric}
which have four three-dimensional cones. 
\begin{figure}[h]
  \begin{tikzpicture}
  \draw (0,0) node[left]{$e_1$} -- (0:2) node[right]{$e_2$}-- ++(120:2) node[above]{$e_3$} -- ++(240:2)  -- cycle;
   \draw (1,0) -- ++(60:1) -- ++(180:1) -- ++(300:1);
 \end{tikzpicture}
 \caption{Toric picture for $Y_K$}\label{fig:toric}
 \end{figure}
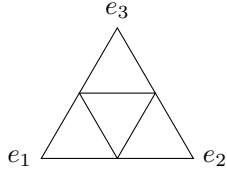
The action of $G/K=\langle \Sbar, \Rbar \rangle \cong S_3$ on $Y_K$ is determined by its action on the toric picture as a dihedral group.
In particular, on the affine open set $U=\Spec \bC[\frac{x_1x_2}{x_3}, \frac{x_2x_3}{x_1}, \frac{x_3x_1}{x_2}]$ corresponding to the central three-dimensional cone,
 the action is that of $S_3$ on $\bA^3$ induced by the permutation of coordinates.
 The complement of $U$ is the union of three toric divisors $\Spec \bC[x_1^2, x_2^2] \sqcup \Spec \bC[x_2^2, x_3^2] \sqcup \Spec \bC[x_3^2, x_1^2]$ and a non-trivial stabilizer of a point on the divisors acts locally as a reflection on $Y_K$.
 Therefore, the quotient $Y_K/(G/K)$ is a smooth threefold.
 On the other hand, 
 the singular locus $\Sing(X_K)$ of $X_K$ is the image of the union of three coordinate axes in $\bA^3$.
 Hence, the image of $\Sing(X_K)$ in $X$ is the coordinate axis $L:=\{(t, 0, 0) \} \subset \bA^3$ which is isomorphic to $\bA^1$.
 The exceptional locus $\Exc(f_K)$ of the crepant resolution $f_K:Y_K \to X_K$ is the union of three toric divisors corresponding to
 the lattice points $\frac{e_1+e_2}{2}$, $\frac{e_2+e_3}{2}$, $\frac{e_3+e_1}{2}$ in Figure \ref{fig:toric}.
One can see that the image $Z$ of $\Exc(f_K)$ in $Y_K/(G/K)=Y_K/S_3$ is a smooth surface,
its image in $X$ is $L$, and a fiber of $Z \to L$ is $\bP^1$. 
This implies that $Y_K/(G/K)$ is the blowup of $X$ along $L$.

 Notice that the action of $G/K\cong S_3$ on $U$ is the product of the trivial action on the fixed point locus $Y_K^{(G/K)}\cong \bA^1$ and the action of the two-dimensional dihedral case for $n=3$ in Example \ref{ex:dihedral}. Then, we find that the quotient $Y_K/(H/K)$ by $H/K\cong \bZ/3\bZ$ has a family of $A_2$-singularity parametrized by $\bA^1$. Moreover, it follows that the image $L'$ of $Y_K^{(G/K)}$ in the smooth threefold $Y_K/(G/K)$ is still isomorphic to $\bA^1$, and that $Y$ is the blowup of $Y_K/(G/K)$ along $L'$. Hence, $Y$ is obtained from $X$ by blowing up twice along $\bA^1$ and we have
 \begin{equation}\label{eq:tetrahedral_1}
 D(Y) \cong \langle  D(\bA^1), D(\bA^1), D(X)\rangle.
 \end{equation}

Finally, we describe $\Dtilde$. 
Since $G$ has only one conjugacy class of reflections, $D$ is irreducible. 
We can see that the fixed point locus $Y_K^{(G/H)}$ is isomorphic to a one-point blowup of $\bA^2$, where the projective line corresponds to the two-dimensional upper face of the central three-dimensional cone in Figure  \ref{fig:toric}. Since $Y_K^{(G/H)}$ contains the affine line $Y_K^{(G/K)}$, the image of $Y_K^{(G/H)}$ in $Y_K/(G/K)$ has a family of ordinary cusp along $L'$. 
Hence, the blowup of it along $L'$ is again a one-point blowup of $\bA^2$.
 Thus we obtain
 \begin{equation}\label{eq:tetrahedral_2}
 D^{G/H}(Y_H) \cong \langle D(\bA^0), D(\bA^2), D(Y) \rangle.
 \end{equation}
Combining \eqref{eq:tetrahedral_0}, \eqref{eq:tetrahedral_1} and \eqref{eq:tetrahedral_2},
we have
\[
D^G(\bA^3) \cong \langle D(\bA^0), D(\bA^2), D(\bA^1), D(\bA^1), D(X)\rangle.
\]
Since the number of conjugacy classes in $G$ whose fixed point locus is of dimension $i$ is
$1, 2, 1, 1$ for $i=0,1,2,3$, respectively, this proves Theorem \ref{thm:sod} for the tetrahedral group.

\begin{remark}
The crepant resolution $Y_H$ of $\bA^3/H$ constructed above coincides with
the one constructed by Bertin and Markushevich in \cite[Th\'{e}or\`{e}me 3.1]{MR1273078}.
Indeed, \cite{MR1273078} constructs the crepant resolution as a double covering of $Y$ as above.
\end{remark}

\begin{remark}\label{rem:tetra}
The $H$-Hilbert scheme $H\hilb(\bA^3)$ is studied in \cite{GNS1}, where
the fiber of $H\hilb(\bA^3) \to X_H$ over the origin is shown to be the chain of three copies of $\bP^1$ (\cite[2.5]{GNS1}).
One can see that there are two fixed points of the action of a reflection on the fiber,  one  is contained
in the strict transform of $D$, and the other is isolated.
Thus, $H\hilb(\bA^3)/(G/H)$ is singular also in this case.
\end{remark}

\subsection{The case of the octahedral group}\label{subsec:octahedral}
Let $G\subset \GL(3,\bC)$ be the subgroup consisting of elements that preserve the octahedron in $\bR^3$ and define $H:=G\cap \SL(3,\bC)$. 
Then, $G$ is decomposed as $H\times \{ \pm 1\}$ and $H$ is isomorphic to $S_4$. 
We put
\begin{align*}
X_H&:=\bA^3/H, &X&:=\bA^3/G\cong \bA^3.
\end{align*}
Since there are two conjugacy classes of reflections in $G$, the branch divisor $D$ of the double covering $X_H\to X$ has two irreducible components: one, denoted by $D_1$, is singular, while the other, denoted by $D_2$, is isomorphic to $\bA^2$. The singular locus of $D$ consists of three affine lines $L_i$ which are the image of three rotation axes of order $i=2,3,4$. The singular locus of $D_1$ is precisely $L_3$, and the intersection of $D_1$ and $D_2$ consists of $L_2$ and $L_4$.

We briefly recall the construction of a crepant resolution of $X_H$ by Bertin and Markushevich \cite[Th\'{e}or\`{e}me 3.1 \'{E}tape 2]{MR1273078} in the below. First, we obtain a map $f: Y\to X$ by successively blowing up four affine lines contained in the singular locus of $D$ or in the singular locus of the strict transform of $D$. Since one can see the strict transform $\Dtilde=\Dtilde_1\cup \Dtilde_2$ of $D$ under $f$ is smooth, the double covering $Y_H$ of $Y$ branched along $\Dtilde$ is smooth, too. Moreover, the following equation is verified in \cite{MR1273078}:
\begin{equation}\label{eq:octahedral_logcrepant}
    K_Y+\frac{1}{2}\Dtilde=f^*(K_X+\frac{1}{2}D).
    \end{equation}
Thus, the induced projective birational morphism $f_H: Y_H\to X_H$ is a crepant resolution, 
and we can conclude that the composite of four blowups $f:Y\to X$ is a maximal resolution of $(X,\frac{1}{2}D)$ by Lemma \ref{lem:max}.

Now, by applying Proposition \ref{prop:yamagishi} to $f_H$, we have an equivalence
 \begin{equation}\label{eq:octahedral_1}
 D^G(\bA^3)\cong D^{G/H}(Y_H).
 \end{equation}
 Therefore, the construction above establishes Theorem \ref{thm:equivalence} (1)(2), leading to the following decompositions:
\begin{equation}\label{eq:octahedral_2}
 D^{G/H}(Y_H) \cong \langle D(\Dtilde_1), D(\Dtilde_2), D(Y) \rangle,
 \end{equation}
\begin{equation}\label{eq:octahedral_3}
 D(Y) \cong \langle  D(\bA^1), D(\bA^1), D(\bA^1), D(\bA^1),D(X)\rangle.
 \end{equation}
Since the stack $\scY$ in Theorem \ref{thm:equivalence} (4) is the quotient stack $[Y_H/(G/H)]$, the equivalence \eqref{eq:octahedral_1} confirms Theorem \ref{thm:equivalence} (4).

To prove Theorem \ref{thm:equivalence} (3), we need a more explicit description of $\Dtilde$. By examining the equations in \cite[Th\'{e}or\`{e}me 3.1 \'{E}tape 2]{MR1273078} (along with additional calculations for two blowups left to the reader in \cite{MR1273078}), we find that among the four projective lines that form the fiber $f^{-1}(0)$, three are contained in $\Dtilde_1$ while the remaining one is not in $\Dtilde$. Away from the origin, the fibers are finite in $\Dtilde$. This observation implies that $\Dtilde_2$ is isomorphic to $\bA^2$ and $\Dtilde_1$ is obtained by three successive blowups at points in $\bA^2$. Indeed, since $D_2$ is $\bA^2$ and the normalization of $D_1$ is isomorphic to $\bA^2$ due to the existence of a finite birational morphism $\bA^2\to D_1$ \cite[Proposition 2.2.2]{PVdB}, the induced morphism from $\Dtilde_1$ to $\bA^2$ is a sequence of blowdowns of the projective lines over the origin in $X$. Thus, the derived category $D(\Dtilde_1)$ decomposes as 
\begin{equation}\label{eq:octahedral_4}
 D(\Dtilde_1)\cong \langle D(\bA^0), D(\bA^0), D(\bA^0), D(\bA^2)\rangle.
 \end{equation}

Combining \eqref{eq:octahedral_1},  \eqref{eq:octahedral_2}, \eqref{eq:octahedral_3}, and \eqref{eq:octahedral_4}, 
we see that $D^G(\bA^3)$ decomposes into three copies of $D(\bA^0)$, four copies of $D(\bA^1)$, two copies of $D(\bA^1)$, and $D(X)$.
Since $G$ has $3, 4, 2, 1$ conjugacy classes with fixed point loci of dimension $0,1,2,3$, respectively, this confirms Theorem \ref{thm:sod} for the octahedral group.

\begin{remark}\label{rem:octa}
A description of the fiber $H\hilb(\bA^3) \to X_H$ is given in \cite[3.6]{GNS2}.
It consists of four curves, three of which meet transversally at one point.
One can see that this point is isolated in the fixed point set of the action of $-1$
and $H\hilb(\bA^3)/(G/H)$ is again singular.
\end{remark}
\subsection{The case of the icosahedral group}\label{subsec:icosahedral}
Let $G\subset \GL(3,\bC)$ be the subgroup consisting of elements that preserve the icosahedron in $\bR^3$, and define $H:=G\cap \SL(3,\bC)$.
$G$ is isomorphic to $H\times \{\pm 1\}$ and $H$ is isomorphic to $A_5$. 
We define
\begin{align*}
X_H&:=\bA^3/H, &X&:=\bA^3/G\cong \bA^3.
\end{align*}
The double cover $X_H\to X$ is branched along the discriminant divisor $D\subset X\cong \bA^3$. 
Since there is only one conjugacy class of reflections, $D$ is irreducible and its singular locus is given by the images of three rotation axes $L_i$ of $G$, where $L_i$ is the image of the axis of an order-$i$ rotation for $i=2,3,5$. One can easily see that each $L_i$ is isomorphic to $\bA^1$. 

Following Roan \cite[\S 4]{MR1284568} (which adopts the strategy in \cite{MR1273078}), we obtain a crepant resolution $f_H: Y_H\to X_H$. Here, $Y_H$ is constructed as the double covering branched along a smooth divisor $\Dtilde$ of a smooth threefold $Y$. $Y$ results from a sequence of blowups of $X$ along four affine lines and $\Dtilde$ is the strict transform of $D$. As in \S\ref{subsec:octahedral}, one can see that the composition of four blowups $f: Y\to X$ is a maximal resolution of the pair $(X,\frac{1}{2}D)$. By applying Proposition \ref{prop:yamagishi} to $f_H$, we obtain an equivalence
\begin{equation}\label{eq:icosahedral_1}
 D^{G/H}(Y_H)\cong D^G(\bA^3).
 \end{equation}
 Since the stack $\scY$ in Theorem \ref{thm:equivalence} (4) is the quotient stack $[Y_H/(G/H)]$, the equivalence \eqref{eq:icosahedral_1} confirms Theorem \ref{thm:equivalence} (4). Moreover, since $\pi_{G/H}: Y_H\to Y$ is the double covering branched along the smooth divisor $\Dtilde$, we have
\begin{equation}\label{eq:icosahedral_2}
 D^{G/H}(Y_H) \cong \langle D(\Dtilde), D(Y)\rangle.
 \end{equation}
 The fact that $f:Y\to X$ is a sequence of four blowups along $\bA^1$ implies 
\begin{equation}\label{eq:icosahedral_3}
 D(Y) \cong \langle  D(\bA^1), D(\bA^1), D(\bA^1), D(\bA^1), D(X)\rangle.
 \end{equation}
Hence, Theorem \ref{thm:kawamata} (1)(2) are now established.
 
We describe $\Dtilde$ more explicitly for Theorem \ref{thm:equivalence} (3). By analyzing the equations in \cite[\S 4]{MR1284568}, one can see that all of the four projective lines that form the fiber of $f$ over the origin in $X$ are contained in $\Dtilde$, whereas the fibers over the other points of $D$ are finite in $\Dtilde$. Hence, as in \S \ref{subsec:octahedral}, it follows that $\Dtilde$ is obtained from $\bA^2$ through four successive blowups at points. Consequently, Theorem \ref{thm:equivalence}(3) is verified, and we obtain a decomposition 
\begin{equation}\label{eq:icosahedral_4}
 D(\Dtilde)\cong \langle  D(\bA^0), D(\bA^0), D(\bA^0), D(\bA^0), D(\bA^2)\rangle.
 \end{equation}
 
Combining \eqref{eq:icosahedral_1}, \eqref{eq:icosahedral_2},
\eqref{eq:icosahedral_3}, and \eqref{eq:icosahedral_4}, we
conclude that $D^G(\bA^3)$ decomposes into four copies of $D(\bA^0)$,
four copies of $D(\bA^1)$, $D(\bA^2)$ and $D(X)$.
As a consequence, Theorem \ref{thm:sod} for the icosahedral group follows from the fact that the conjugacy classes in $G$ with fixed point loci of dimension $0,1,2$ and $3$ are counted as 
$4, 4, 1$ and $1$, respectively.
\begin{remark}\label{rem:icosa}
The fiber of $H\hilb(\bA^3) \to X_H$ consists of four curves, three of which meet at a point  \cite[Corollary 3.6]{GNS1}. The point is again isolated in the fixed point locus of the action of $-1$ and $H\hilb(\bA^3)/(G/H)$ is singular.
\end{remark}
\bibliographystyle{abbrv}

\begin{thebibliography}{10}

\bibitem{AGV}
D.~Abramovich, T.~Graber, and A.~Vistoli.
\newblock Gromov-{W}itten theory of {D}eligne-{M}umford stacks.
\newblock {\em Amer. J. Math.}, 130(5):1337--1398, 2008.

\bibitem{Bass-Haboush}
H.~Bass and W.~Haboush.
\newblock Some equivariant {$K$}-theory of affine algebraic group actions.
\newblock {\em Comm. Algebra}, 15(1-2):181--217, 1987.

\bibitem{BLS}
D.~Bergh, V.~A. Lunts, and O.~M. Schn\"urer.
\newblock Geometricity for derived categories of algebraic stacks.
\newblock {\em Selecta Math. (N.S.)}, 22(4):2535--2568, 2016.

\bibitem{MR1273078}
J.~Bertin and D.~Markushevich.
\newblock Singularit\'es quotients non ab\'eliennes de dimension {$3$} et
  vari\'et\'es de {C}alabi-{Y}au.
\newblock {\em Math. Ann.}, 299(1):105--116, 1994.

\bibitem{rank2}
A.~Bhaduri, Y.~Davidov, E.~Faber, K.~Honigs, P.~McDonald, C.~E. Overton-Walker,
  and D.~Spence.
\newblock An explicit derived {M}c{K}ay correspondence for some complex
  reflection groups of rank two.
\newblock arXiv:2412.17937.

\bibitem{BCHM}
C.~Birkar, P.~Cascini, C.~D. Hacon, and J.~McKernan.
\newblock Existence of minimal models for varieties of log general type.
\newblock {\em J. Amer. Math. Soc.}, 23(2):405--468, 2010.

\bibitem{BK}
A.~I. Bondal and M.~M. Kapranov.
\newblock Representable functors, {S}erre functors, and reconstructions.
\newblock {\em Izv. Akad. Nauk SSSR Ser. Mat.}, 53(6):1183--1205, 1337, 1989.

\bibitem{BB}
A.~V. Borovik and A.~Borovik.
\newblock {\em Mirrors and reflections}.
\newblock Universitext. Springer, New York, 2010.
\newblock The geometry of finite reflection groups.

\bibitem{BKR}
T.~Bridgeland, A.~King, and M.~Reid.
\newblock The {M}c{K}ay correspondence as an equivalence of derived categories.
\newblock {\em J. Amer. Math. Soc.}, 14(3):535--554 (electronic), 2001.

\bibitem{BFI}
R.-O. Buchweitz, E.~Faber, and C.~Ingalls.
\newblock A {M}c{K}ay correspondence for reflection groups.
\newblock {\em Duke Math. J.}, 169(4):599--669, 2020.

\bibitem{Cadman}
C.~Cadman.
\newblock Using stacks to impose tangency conditions on curves.
\newblock {\em Amer. J. Math.}, 129(2):405--427, 2007.

\bibitem{capellan}
J.~A.~N. Capellan.
\newblock The {M}ckay {C}orrespondence for {D}ihedral {G}roups: {T}he {M}oduli
  {S}pace and the {T}autological {B}undles.
\newblock arXiv:2405.09491, to appear in Hiroshima Math.~J.

\bibitem{Coxeter}
H.~S.~M. Coxeter.
\newblock Discrete groups generated by reflections.
\newblock {\em Ann. of Math. (2)}, 35(3):588--621, 1934.

\bibitem{Craw-Ishii}
A.~Craw and A.~Ishii.
\newblock Flops of {$G$}-{H}ilb and equivalences of derived categories by
  variation of {GIT} quotient.
\newblock {\em Duke Math. J.}, 124(2):259--307, 2004.

\bibitem{GNS1}
Y.~Gomi, I.~Nakamura, and K.-i. Shinoda.
\newblock Hilbert schemes of {$G$}-orbits in dimension three.
\newblock {\em Asian J. Math.}, 4(1):51--70, 2000.
\newblock Kodaira's issue.

\bibitem{GNS2}
Y.~Gomi, I.~Nakamura, and K.-i. Shinoda.
\newblock Coinvariant algebras of finite subgroups of {${\rm SL}(3,{\bf C})$}.
\newblock {\em Canad. J. Math.}, 56(3):495--528, 2004.

\bibitem{IIN}
A.~Ishii, Y.~Ito, and {\'A}.~Nolla~de Celis.
\newblock On {$G/N$}-{H}ilb of {$N$} -{H}ilb.
\newblock {\em Kyoto J. Math.}, 53(1):91--130, 2013.

\bibitem{Ishii-Ueda}
A.~Ishii and K.~Ueda.
\newblock The special {M}c{K}ay correspondence and exceptional collections.
\newblock {\em Tohoku Math. J. (2)}, 67(4):585--609, 2015.

\bibitem{Ito-Nakamura}
Y.~Ito and I.~Nakamura.
\newblock Hilbert schemes and simple singularities.
\newblock In {\em New trends in algebraic geometry ({W}arwick, 1996)}, volume
  264 of {\em London Math. Soc. Lecture Note Ser.}, pages 151--233. Cambridge
  Univ. Press, Cambridge, 1999.

\bibitem{IR}
Y.~Ito and M.~Reid.
\newblock The {M}c{K}ay correspondence for finite subgroups of {${\rm
  SL}(3,\bold C)$}.
\newblock In {\em Higher-dimensional complex varieties ({T}rento, 1994)}, pages
  221--240. de Gruyter, Berlin, 1996.

\bibitem{Kapranov-Vasserot}
M.~Kapranov and E.~Vasserot.
\newblock Kleinian singularities, derived categories and {H}all algebras.
\newblock {\em Math. Ann.}, 316(3):565--576, 2000.

\bibitem{kawamata_I}
Y.~Kawamata.
\newblock Derived categories of toric varieties.
\newblock {\em Michigan Math. J.}, 54(3):517--535, 2006.

\bibitem{kawamata_toric_III}
Y.~Kawamata.
\newblock Derived categories of toric varieties {III}.
\newblock {\em Eur. J. Math.}, 2(1):196--207, 2016.

\bibitem{kawamata_GL3}
Y.~Kawamata.
\newblock Derived {M}c{K}ay correspondence for {$GL(3, \bold{C})$}.
\newblock {\em Adv. Math.}, 328:1199--1216, 2018.

\bibitem{Nakamura}
I.~Nakamura.
\newblock Hilbert schemes of abelian group orbits.
\newblock {\em J. Algebraic Geom.}, 10(4):757--779, 2001.

\bibitem{Orlov_SOD}
D.~O. Orlov.
\newblock Projective bundles, monoidal transformations, and derived categories
  of coherent sheaves.
\newblock {\em Izv. Ross. Akad. Nauk Ser. Mat.}, 56(4):852--862, 1992.

\bibitem{PVdB}
A.~Polishchuk and M.~Van~den Bergh.
\newblock Semiorthogonal decompositions of the categories of equivariant
  coherent sheaves for some reflection groups.
\newblock {\em J. Eur. Math. Soc. (JEMS)}, 21(9):2653--2749, 2019.

\bibitem{potter}
R.~Potter.
\newblock Derived {C}ategories of {S}urfaces and {G}roup {A}ctions.
\newblock Phd thesis, University of Sheffield, Feburuary 2018.

\bibitem{MR1284568}
S.-S. Roan.
\newblock On {$c_1=0$} resolution of quotient singularity.
\newblock {\em Internat. J. Math.}, 5(4):523--536, 1994.

\bibitem{MR4871720}
R.~Yamagishi.
\newblock Moduli of {G}-constellations and crepant resolutions {II}: {T}he
  {C}raw--{I}shii conjecture.
\newblock {\em Duke Math. J.}, 174(2):229--285, 2025.

\end{thebibliography}
\def\cprime{$'$} \def\cprime{$'$}

\end{document}